\newtheorem{thm}{Theorem}
\newtheorem{cor}[thm]{Corollary}
\newtheorem{definition}{Definition}
\newtheorem{lem}[thm]{Lemma}
\newtheorem{prop}[thm]{Proposition}
\newcommand{\block}{\mathcal{B}}
\newcommand{\E}{\mathbb{E}}
\newcommand{\R}{\mathbb{R}}
\newcommand{\I}{\mathcal{I}}
\renewcommand{\l}{\left}
\renewcommand{\r}{\right}
\renewcommand{\P}{\mathbb{P}}
\newcommand{\pushright}[1]{\ifmeasuring@#1\else\omit\hfill$\displaystyle#1$\fi\ignorespaces}
\newfont{\footsc}{cmcsc10 at 8truept}
\newfont{\footbf}{cmbx10 at 8truept}
\newfont{\bigrm}{cmr12 scaled\magstep4}
\newfont{\footrm}{cmr10 at 10truept}
\begin{document}

\title{Randomized greedy algorithm for independent sets in regular uniform hypergraphs with large girth}

\medskip

\author{
Jiaxi Nie\footnote{E-mail: jin019@ucsd.edu} \qquad \qquad Jacques Verstra\" ete\footnote{E-mail: jacques@ucsd.edu} \\
\\
Department of Mathematics \\
University of California, San Diego \\
9500 Gilman Drive \\
La Jolla CA 92093-0112.
}

\maketitle

\begin{abstract}
In this paper, we consider a randomized greedy algorithm for independent sets in $r$-uniform $d$-regular hypergraphs $G$ on $n$ vertices with girth $g$. By analyzing the expected size of the independent sets generated by this algorithm, we show that $\alpha(G)\geq (f(d,r)-\epsilon(g,d,r))n$, where $\epsilon(g,d,r)$ converges to $0$ as $g\rightarrow\infty$ for fixed $d$ and $r$, and $f(d,r)$ is determined by a differential equation. This extends earlier results of Gamarnik and Goldberg for graphs~\cite{GG}. We also prove that when applying this algorithm to uniform linear hypergraphs with bounded degree, the size of the independent sets generated by this algorithm concentrate around the mean asymptotically almost surely.
\end{abstract}

KEYWORDS: Randomized greedy algorithm, independent sets, hypergraphs with large girth.

\section{Introduction}

A {\em hypergraph} is a pair $(V,E)$ where $V$ is a set and $E$ is a family of nonempty subsets of $V$. The $x\in V$ are called vertices and the $e\in E$ are called edges. We use the notations $v(G)=|V(G)|$, $e(G)=|E(G)|$. A hypergraph is called {\em $r$-uniform} if all edges have size $r$. A {\em linear hypergraph} is a hypergraph $(V,E)$ such that for any distinct edges $e,f \in E$, $|e \cap f| \leq 1$. The {\em degree} of a vertex $v$, denoted by $d(v)$, is the number of edges that contains it. A hypergraph is {\em $d$-regular} if all vertices have degree $d$. An {\em independent set} of a hypergraph $G$ is a subset of $V(G)$ which does not contain any edge of $G$. The maximum size of an independent set in $G$ is called the {\em independence number} of $G$, denoted $\alpha(G)$. 

In this paper, we study a natural randomized greedy algorithm for finding independent sets in hypergraphs. The algorithm  iteratively selects a vertex uniformly randomly from all remaining vertices of the hypergraph and adds it to the independent set so far, and then deletes all remaining vertices that form an edge with the set of selected vertices, and repeat until no vertices remain. The independent set generated by this algorithm for a hypergraph $G$ is denoted $\I(G)$.

\subsection{Independent sets in graphs}

Tur\'an's Theorem~\cite{T} shows that an $n$-vertex graph with average degree $d$ has independence number $\alpha(G)\geq n/(d+1)$, with equality only for a disjoint union of cliques $K_{d + 1}$. 
For triangle-free graphs $G$, Ajtai, Koml\'os and Szemer\'edi~\cite{AKS} improved this bound by a factor of order $\log d$, and Shearer~\cite{She83} gave a further improvement:

\begin{thm} 
Let $G$ be an $n$-vertex graph triangle-free of average degree $d \geq 2$. Then 
\begin{equation}\label{She}
    \alpha(G)\ge\frac{d\log d-d+1}{(d-1)^2}\cdot n.
\end{equation}
\end{thm}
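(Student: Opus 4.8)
The plan is to follow Shearer's two-step strategy: first prove a stronger ``local'' bound involving the whole degree sequence, then pass to the average degree by convexity. The local claim is that every triangle-free graph $G$ satisfies
\[
\alpha(G)\;\ge\;\sum_{v\in V(G)}f\big(d(v)\big),\qquad f(x):=\frac{x\ln x-x+1}{(x-1)^{2}},
\]
with $f(1):=\tfrac12$ and $f(0):=1$ read off by continuity. Granting this: one checks that $f$ is positive, decreasing and convex on $[0,\infty)$ (conveniently via the integrated form $(x-1)^{2}f(x)=\int_{1}^{x}\ln t\,dt$, from which one also reads off the relation $(1+d)f(d)=1+d(d-1)\big(-f'(d)\big)$ used below), so Jensen's inequality gives $\sum_{v}f(d(v))\ge n\,f\!\big(\tfrac1n\sum_{v}d(v)\big)=n f(d)$, which is exactly \eqref{She}. (The hypothesis $d\ge 2$ is not needed for this deduction.)

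The local bound I would prove by induction on $|V(G)|$, the empty graph being trivial. Fix a vertex $v$. Then $G':=G-v-N(v)$ is triangle-free with fewer vertices, so by induction $\alpha(G')\ge\sum_{w\in V(G')}f\big(d_{G'}(w)\big)$, and appending $v$ to a maximum independent set of $G'$ gives $\alpha(G)\ge 1+\alpha(G')$. Triangle-freeness makes $N(v)$ independent, so each $w\in V(G')$ loses exactly $c_v(w):=|N(w)\cap N(v)|$ from its degree. Bounding $f\big(d(w)-c_v(w)\big)\ge f\big(d(w)\big)+c_v(w)\big(-f'(d(w))\big)$ by convexity and rearranging, the inductive step reduces to exhibiting a vertex $v$ with
\[
f\big(d(v)\big)+\sum_{u\in N(v)}f\big(d(u)\big)\;\le\;1+\sum_{w\in V(G')}c_v(w)\big(-f'(d(w))\big).
\]
No single $v$ need satisfy this (e.g.\ the centre of a path $P_3$ fails it), so I would sum the inequality over all $v\in V(G)$: if the sum holds, some $v$ works. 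A double-counting computation (using triangle-freeness for the cross terms, so that a vertex and a neighbour have no common neighbour) together with the vertex-wise identity $(1+d)f(d)=1+d(d-1)(-f'(d))$ reduces the summed inequality to
\[
\sum_{\{u,w\}\in E(G)}\big(d(u)-d(w)\big)\big(f'(d(u))-f'(d(w))\big)\;\ge\;0,
\]
which holds term by term because $f'$ is non-decreasing (i.e.\ $f$ is convex). That completes the induction.

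The step I expect to be the crux is not any single estimate but the global bookkeeping: one has to (a) discover that $f$ is forced to be the solution of $(1+d)f(d)=1+d(d-1)(-f'(d))$, equivalently of the identity obtained by differentiating $(x-1)^{2}f(x)=x\ln x-x+1$ — the equation that makes the averaged deletion inequality balance on regular graphs — and (b) realize that one averages over $v$ rather than choosing $v$ greedily, after which the apparently delicate averaged inequality collapses to a manifestly nonnegative sum over edges, using only convexity of $f$. The remaining care is at the boundary: degree-$0$ vertices contribute $f(0)=1$ and are handled trivially; degree-$1$ vertices, near which the extremal configuration sits — a disjoint union of edges, for which $\alpha(G)=n/2=nf(1)$ — need $f(1)=\tfrac12$ interpreted by continuity; and one must confirm $f$ and $-f'$ stay well-defined and monotone down to $x=0$. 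Everything else is routine computation with the explicit $f$, best organised through $(x-1)^{2}f(x)=x\ln x-x+1$.
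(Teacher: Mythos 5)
The paper does not prove this theorem: it is quoted as background with a citation to Shearer's 1983 paper (reference~[She83]), so there is no in-paper argument to compare yours against. Judged on its own, your reconstruction is Shearer's original approach and is essentially correct: prove the degree-sequence strengthening $\alpha(G)\ge\sum_{v}f(d(v))$ by induction via closed-neighbourhood deletion, linearize the degree drops by convexity, average over the choice of deleted vertex, then pass to the average degree by Jensen. I verified the two computations you compress. Differentiating $(x-1)^2f(x)=x\ln x-x+1$ and simplifying gives $x(x-1)f'(x)+(x+1)f(x)=1$, which is exactly your identity $(1+d)f(d)=1+d(d-1)\bigl(-f'(d)\bigr)$. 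And in the double count, after interchanging the sums, triangle-freeness kills the distance-one cross terms (a vertex and a neighbour share no common neighbour), giving
\[
\sum_{v}\sum_{w\in V(G'_v)}c_v(w)\bigl(-f'(d(w))\bigr)
=\sum_{w}\bigl(-f'(d(w))\bigr)\sum_{u\in N(w)}\bigl(d(u)-1\bigr),
\]
so that, using the identity, the summed deficiency collapses to $-\sum_{\{u,w\}\in E(G)}\bigl(d(u)-d(w)\bigr)\bigl(f'(d(u))-f'(d(w))\bigr)\le 0$, as you claim, and a usable vertex $v$ exists. The one thing you leave entirely to ``one checks'' is the verification that $f$ is nonincreasing and convex on $[0,\infty)$ with the continuous values $f(0)=1$ and $f(1)=\tfrac12$; this is not optional, since both the linearization $f(d(w)-c)\ge f(d(w))+c\bigl(-f'(d(w))\bigr)$ and the Jensen step rest on convexity, but it is a routine calculus exercise starting from $(x-1)^2f(x)=\int_{1}^{x}\ln t\,dt$. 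With that filled in, your argument is a complete and correct proof of Shearer's bound.
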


The {\em girth} of a graph containing a cycle is the length of a shortest cycle in the graph. For graphs with high girth, Shearer~\cite{She} improved (\ref{She}), and Lauer and Wormald~\cite{LW} showed that 
there exist a function $\delta = \delta(g)$ such that $lim_{g \rightarrow \infty} \delta(g) = 0$ and if $G$ is a $d$-regular graph of girth $g$, then 
\begin{equation}
\alpha(G) \geq \frac{1}{2}(1-(d-1)^{-\frac{2}{d-2}})n  -  \delta n.
\end{equation}
By analyzing the performance of the greedy algorithm, Gamarnik and Goldberg~\cite{GG} prove the same bound, with an explicit form for $\delta$. It is convenient to let
 \begin{equation}
 \epsilon=\epsilon(d,g)= \frac{d(d-1)^{\lfloor\frac{g-3}{2}\rfloor}}{(\lfloor\frac{g-1}{2}\rfloor)!}.
 \end{equation} 
 Note that for each fixed $d$, $\epsilon(d,g) \rightarrow 0$ as $g \rightarrow \infty$.

\begin{thm}\label{GG}
	Let integers $d\geq3$ and $g\geq4$, and let $G$ be a $d$-regular graph on $n$ vertices with girth $g$, and let $\mathcal{I}$ be the independent set generated by the greedy algorithm. Then
	\begin{equation}
	    \Bigl(\frac{1-(d-1)^{-2/(d-2)}}{2}-\epsilon\Bigr)n\le\E[|\mathcal{I}|]\le\Bigl(\frac{1-(d-1)^{-2/(d-2)}}{2}+\epsilon\Bigr)n,
	\end{equation}
\end{thm}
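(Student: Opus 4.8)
\emph{Proof sketch.} The plan is to recast the algorithm in a static form, solve it exactly on the infinite $d$-regular tree via a differential equation, and then transfer the answer to a graph of girth $g$ with an explicit error. Attach to every vertex $v$ an independent uniform $x_v\in[0,1]$, a ``birth time'', and put $v\in\I$ if and only if no neighbour $u$ of $v$ with $x_u<x_v$ is itself in $\I$; equivalently, run the greedy rule while examining the vertices in increasing order of $x_v$. Since the greedy algorithm of the introduction is exactly this process run in a uniformly random vertex order, the two induce the same law on $\I(G)$, so $\E[|\I(G)|]=\sum_{v\in V(G)}\P[v\in\I]$, and it suffices to show that each summand $\P[v\in\I]$ is within $\epsilon(d,g)$ of the tree value computed below.

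On the infinite $d$-regular tree $T_d$, condition on $x_v=t$. A neighbour $u$ \emph{blocks} $v$ precisely when $x_u<t$ and $u\in\I$, and since $x_u<x_v$, whether $u\in\I$ is decided, in the same recursive fashion, by the $d-1$ neighbours of $u$ other than $v$, whose branches are disjoint and hence independent on a tree. Writing $\Phi(s)$ for the probability that one such branch-root blocks a parent of birth time $s$, this gives $\Phi(s)=\int_0^s(1-\Phi(r))^{d-1}\,dr$, that is $\Phi'=(1-\Phi)^{d-1}$ with $\Phi(0)=0$, and $\P_{T_d}[v\in\I\mid x_v=t]=(1-\Phi(t))^d$. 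The ODE integrates explicitly, $1-\Phi(t)=(1+(d-2)t)^{-1/(d-2)}$, so that
\begin{equation*}
\P_{T_d}[v\in\I]=\int_0^1\bigl(1+(d-2)t\bigr)^{-d/(d-2)}\,dt=\frac{1-(d-1)^{-2/(d-2)}}{2},
\end{equation*}
which is the main term of the theorem (the differential equation referred to in the abstract).

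To transfer this to $G$, introduce the truncated iterates $\Phi_0\equiv0$ and $\Phi_{j+1}(s)=\int_0^s(1-\Phi_j(r))^{d-1}\,dr$. The operator $T\colon\psi\mapsto\bigl[s\mapsto\int_0^s(1-\psi(r))^{d-1}dr\bigr]$ is order-reversing, so $T^2$ is order-preserving, the even and odd iterates converge monotonically to $\Phi$, and $\Phi$ always lies between two consecutive iterates $\Phi_{k-1},\Phi_k$. When the ball of radius $\approx g/2$ about $v$ in $G$ is a tree, unrolling the recursion for the neighbours of $v$ through $k$ levels is exact and expresses $\P_G[v\in\I\mid x_v=t]$ as $(1-T^{k}(\gamma)(t))^{d}$ for some graph-dependent function $\gamma$ with $0\le\gamma\le1$; order-reversal then places $T^k(\gamma)$, hence $\P_G[v\in\I]$, between the same two consecutive approximants as $\Phi$. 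Combining this with the elementary estimate $|\Phi_j(t)-\Phi_{j-1}(t)|\le(d-1)^{j-1}t^j/j!$ (induct from $|\Phi_1-\Phi_0|(t)=t$ and $|\Phi_{j+1}-\Phi_j|(t)\le(d-1)\int_0^t|\Phi_j-\Phi_{j-1}|\,dr$, using $|a^d-b^d|\le d|a-b|$ on $[0,1]$) yields
\begin{equation*}
\Bigl|\,\P_G[v\in\I]-\tfrac{1-(d-1)^{-2/(d-2)}}{2}\,\Bigr|\le d\int_0^1|\Phi_k(t)-\Phi_{k-1}(t)|\,dt\le\frac{d(d-1)^{k-1}}{k!},
\end{equation*}
and taking $k$ as large as the girth permits, with the floor functions tracked carefully, gives exactly the bound $\epsilon(d,g)$; summing over the $n$ vertices produces both inequalities at once.

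I expect the last step to be the main obstacle: making ``unrolling is exact up to radius $k$'' fully rigorous and pushing $k$ up to the precise value $\lfloor(g-1)/2\rfloor$ encoded in $\epsilon(d,g)$. For even $g$ this reduces to the statement that the ball of radius $g/2-1$ is a tree, but for odd $g$ the ball of radius $\lfloor(g-1)/2\rfloor$ need not be a tree (a vertex may lie on a shortest cycle), and one must then show that the only dependence among the $d$ branch-processes at that final level stems from the birth-time-decreasing exploration travelling all the way around a shortest cycle --- an event of strictly smaller order, already absorbed into the factorially small error term. Everything in the first two steps is routine once the static reformulation has been set up.
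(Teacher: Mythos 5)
Your proposal is, up to reparametrisation, the same argument the paper uses (which for $r=1$ is the Gamarnik--Goldberg proof). Setting $x_v=1-W_v$ turns the paper's ``largest weight first'' process into your ``smallest birth time first'' process, and your blocker probability $\Phi(s)$ equals $1-F_{d-1}(1-s)$; the ODE $\Phi'=(1-\Phi)^{d-1}$ is then exactly the paper's $F_{d-1}'=F_{d-1}^{d-1}$ from Lemma~\ref{Fd}, and your monotone sandwich on $\Phi_{k-1},\Phi_k$ is the content of the paper's Lemma~\ref{IneqOfF} and Corollary~\ref{CofF}. Two points of care in the transfer step. First, the intermediate identity $\P_G[v\in\I\mid x_v=t]=(1-T^k(\gamma)(t))^d$ for a single deterministic $\gamma$ cannot hold as written, because the boundary conditions seen by the $d$ branches at depth $k$ are correlated through the exterior of the ball; the repair is a pathwise sandwich $\prod_i\bigl(1-Z_i^{\mathrm{high}}\bigr)\le I(v\in\I)\le\prod_i\bigl(1-Z_i^{\mathrm{low}}\bigr)$, where $Z_i^{\mathrm{low}},Z_i^{\mathrm{high}}$ come from the two extreme boundary assignments on the $i$-th subtree alone, and these products do factor in expectation --- this is what the paper achieves through influence-blocking subgraphs and the coupling in Lemma~\ref{LocalProb}. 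Second, your odd-girth worry is genuine, and in fact the paper's own proof of Lemma~\ref{LocalProb} contains the same gap: it asserts that $N_{h_0+1}(v)$, with $h_0+1=\lfloor(g-1)/2\rfloor$, induces a copy of $\tilde T(d,h_0+1)$, but for odd $g$ a shortest cycle through $v$ lies entirely in that ball (e.g.\ the Petersen graph, $d=3$, $g=5$, has $N_2(v)$ equal to the whole graph, which is not a tree). The clean fix, for either write-up, is to unroll only to radius $\lfloor(g-2)/2\rfloor$, which \emph{always} induces a tree; for odd $g$ this loses one level, giving an error of order $d(d-1)^{h_0-1}/(h_0+1)!$, which is still $\le\epsilon(d,g)$, so the stated bound survives, and your alternative of absorbing the wrap-around-a-girth-cycle event into the union bound would also work but needs the extra bookkeeping you anticipate.
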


The bounds are effective when $d$ is fixed and $g$ is large and, in particular, Theorem \ref{GG} shows
\begin{equation}
    \alpha(G)\geq\Bigl(\frac{1-(d-1)^{-2/(d-2)}}{2}-\epsilon\Bigr)n.
\end{equation}
We also observe that when $g$ is sufficiently large relative to $d$, this bound agrees with (\ref{She}) asymptotically as $d \rightarrow \infty$, since
$$
(d-1)^{-2/(d-2)}=\exp\l(-\frac{2\log (d-1)}{d-2}\r)=1-\frac{2\log(d-1)}{d-2}(1+o_d(1)),
$$
where $o_d(1)$ here represents a function of $d$ that converges to zero as $d\rightarrow\infty$. When we say a function $f(x)$ is asymptotic to $g(x)$ as $x\rightarrow\infty$(which is abbreviated $f\sim g$), it means that $\lim_{x\rightarrow\infty}f(x)/g(x)=1$.

\subsection{Independent sets in hypergraphs}

For $(r+1)$-uniform hypergraphs with average degree $d$, Caro and Tuza~\cite{CT} showed that
\begin{equation}
    \alpha(G)\ge \frac{d!}{\prod_{i=1}^d(i+\frac{1}{r})}\cdot n.
\end{equation}
The same bound can also be obtained by extending the Caro-Wei~\cite{C}~\cite{W}
bound for independent sets in graphs: taking a random ordering of the vertices of the hypergraph, let $I$ be the set of vertices $v$ such that for every edge $e$ containing $v$, $v$ is not the smallest vertex in $e$. Then it can be shown via elementary combinatorial methods that
\begin{equation}\label{combinatorial}
 \mathbb E[|I|] \geq \sum_{v \in V} \frac{d!}{\prod_{i=1}^d(i+\frac{1}{r})}=\frac{d!}{\prod_{i=1}^d(i+\frac{1}{r})}\cdot n.
 \end{equation}
The same algorithm can be implemented via the following random process, which provide a different (and possibly easier) way to analyze the outcome (see for example in Dutta, Mubayi and Subramanian~\cite{DMS}):
\begin{enumerate}
    \item Equip each vertex with i.i.d.~weight from the uniform distribution on [0,1]. Then with probability 1, all vertices will have distinct weights.
    \item Select all the vertices that are not the smallest-weighted vertex in any edge that contains it. These vertices form an independent set.
\end{enumerate}
If we select vertices in a more careful way -- iteratively select the vertex with largest weight, i.e., select the vertex with largest weight, delete vertices that form an edge with the vertices selected thus far, and repeat -- then this random process will be equivalent to the randomized greedy algorithm.
In any case, a computation shows 
\begin{equation}
 \mathbb E[|I|] \geq n\int_0^1 (1 - x^{r})^ddx
 \end{equation}
which gives (\ref{combinatorial}). These bounds are asymptotic to $\Gamma(1 + \frac{1}{r})nd^{-\frac{1}{r}}$ as $d \rightarrow \infty$, where $\Gamma$ here is the well-known gamma function that extends factorial function to complex numbers. 
In this paper, we consider this algorithm in uniform hypergraphs of large girth. 

\medskip

To define {\em girth} in hypergraph, we first need to define what is a {\em cycle} in hypergraph. There are many different ways to define cycle in hypergraph--see, e.g., a talk by S\'ark\"ozy~\cite{Sar}. Here we chose to work with the {\em Berge-cycle}. For $k\geq3$, a {\em Berge $k$-cycle} is an $r$-uniform hypergraph with $k$ edges $e_1,e_2,\dots,e_k$ such that there exist distinct vertices $v_1,v_2,\dots,v_k$ such that $\{v_k,v_1\}\in e_1,\{v_1,v_2\}\in e_2,\dots,\{v_{k-1},v_k\}\in e_k$. When $k = 2$, this corresponds to $v_1,v_2 \in e_1 \cap e_2$. The {\em girth} of a hypergraph containing a Berge cycle is the smallest $g$ such that the hypergraph contains a Berge $g$-cycle. In particular, the girth of a non-linear hypergraph is 2. Ajtai, Koml\'os, Pintz, Spencer and Szemer\'edi~\cite{AKPSS} established the following lower bound for $(r + 1)$-uniform hypergraphs with girth $g\geq5$, which improves
(\ref{combinatorial}) by a factor of order $(\log d)^{\frac{1}{r}}$. 

\begin{thm}
For integer $r\geq 1$, real number $d$ sufficiently large and integer $n$ sufficiently large, let $G$ be an $n$-vertex $(r+1)$-uniform hypergraphs with average degree $d$ and girth at least $5$, then
\begin{equation}\label{AKPSS}
    \alpha(G)\geq 0.36\cdot10^{-\frac{5}{r}}\l(\frac{\log{d}}{rd}\r)^{\frac{1}{r}}n.
\end{equation}
\end{thm}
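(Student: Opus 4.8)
The bound in \eqref{AKPSS} is proved by the semi-random (R\"odl nibble) method; here is the plan. A routine cleaning step lets me assume $G$ has maximum degree at most $2d$: since $\sum_v d(v)=dn$, fewer than half the vertices have degree above $2d$, so deleting them leaves an induced sub-hypergraph on $\ge n/2$ vertices that is still $(r+1)$-uniform, still has girth at least $5$, and has maximum degree at most $2d$, and it suffices to prove the bound (with $n$ replaced by $n/2$ and $d$ by $2d$) for such hypergraphs. The algorithm runs in $T=\Theta(\log d)$ rounds. In round $t$ I keep a surviving set $V_t$ (with $V_0=V$) and an independent set $I$ built so far: I select each vertex of $V_t$ independently with probability $p_t$ (to be chosen), add to $I$ a maximal subset of the selected vertices keeping $I$ independent (equivalently, \emph{prune} away one vertex from each edge of $G$ all of whose vertices are already in $I$ or were just selected), and then delete from $V_t$ every surviving vertex that now lies in an edge all of whose other vertices have been placed in $I$.

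The bookkeeping is the crux. Let $H_t$ be the \emph{residual hypergraph} on $V_t$ whose edges are the sets $e\cap V_t$ over original edges $e$ with $e\setminus V_t\subseteq I$; these have size between $2$ and $r+1$ (a size-$1$ residual edge forces its vertex to be discarded, and it is). I track $n_t=|V_t|$ and, for each $s\in\{2,\dots,r+1\}$, the maximum $s$-degree $D_t^{(s)}$, the largest number of size-$s$ residual edges through one vertex; initially $D_0^{(r+1)}\le 2d$ and $D_0^{(s)}=0$ for $s\le r$. A one-round analysis gives, to leading order, $n_{t+1}\approx n_t(1-p_t-q_t)$ and $D_{t+1}^{(r+1)}\approx D_t^{(r+1)}(1-r q_t)$, where $q_t\approx\sum_{s=2}^{r+1}p_t^{s-1}D_t^{(s)}$ is the probability a given surviving vertex is deleted this round, while the number of vertices added to $I$ is $\approx p_t n_t(1-O(q_t))$; in addition the intermediate degrees obey a downward-cascade recursion $D_{t+1}^{(s)}\approx D_t^{(s)}(1-(s-1)q_t)+s\,p_t\,D_t^{(s+1)}$, from residual edges that shrink when one of their vertices enters $I$. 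The girth-$5$ hypothesis (linearity, plus no Berge $3$- or $4$-cycles) enters twice: it makes the events ``$v$ is deleted'', ``$e$ shrinks to size $s$'', etc.\ essentially independent across the relevant vertices and edges, so these approximations are faithful and the tracked quantities have no heavy tails; and it yields bounded-differences (Azuma) concentration for the nibble within each round, so I may condition on a typical outcome and iterate the recursion on $t$.

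With the recursions in hand I take $p_t$ as large as the constraint $q_t\le\epsilon$ permits, for a small constant $\epsilon=\epsilon(r)$. As the residual degrees shrink, $p_t$ grows, but $n_t$ shrinks at a matching rate, so the per-round gain $p_t n_t$ stays of order $\epsilon^{1/r}d^{-1/r}n$; meanwhile $q_t\approx\epsilon$ drives $D_t^{(r+1)}$, and via the cascade every $D_t^{(s)}$, down to $O(1)$ after $T=\Theta(\epsilon^{-1}\log d)$ rounds, at which point I finish by extracting an independent set of size $\Omega(n_T)$ from the bounded-degree residual hypergraph using \eqref{combinatorial} — a lower-order contribution. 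Summing the per-round gains gives $|I|$ of order $T\cdot\epsilon^{1/r}d^{-1/r}n$, i.e.\ of order $(\log d/(rd))^{1/r}n$; carrying the explicit constants through the degree bounds (which is what produces the factors of $r$ and the numeric $10^{-5/r}$, the ``$5$'' tracing back to girth $5$) yields the stated inequality, and since $\E[|I|]\le\alpha(G)$ this proves \eqref{AKPSS}.

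I expect the hardest part to be controlling the intermediate residual-edge degrees $D_t^{(s)}$ for $2\le s\le r$: they start at zero but are fed continuously by the cascade, and a crude bound lets them grow polynomially in $t$, which shrinks $p_t$ enough to destroy the logarithmic gain and leave only the Caro--Tuza-type bound of order $d^{-1/r}n$. Getting the sharp exponent requires showing — and this is precisely where ``girth $\ge 5$'' is needed rather than mere linearity — that each $D_t^{(s)}$, and crucially its variance, stays within a constant factor of its mean-field value through all $\Theta(\log d)$ rounds, and then solving the coupled system of recursions to read off the correct $p_t$. A secondary complication is that $H_t$ is not uniform, so the argument cannot be packaged as a clean recursion on a single class of hypergraphs and must instead be run as one explicit $\Theta(\log d)$-round process with the multi-parameter invariant maintained by induction on the round number.
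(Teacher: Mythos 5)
This theorem is not proved in the paper at all: it is quoted from Ajtai--Koml\'os--Pintz--Spencer--Szemer\'edi~\cite{AKPSS} as background in the introduction, and the paper's own result (Theorem~\ref{Thm1}, proved in Section~3 via a one-pass randomized greedy algorithm, influence-blocking hypergraphs, and bonus functions) is a \emph{different} bound that is merely compared against this one asymptotically. So there is no in-paper proof for your argument to be measured against, and the approach you sketch (a multi-round nibble) is in any case quite unlike the machinery this paper actually builds.

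Judged on its own terms, your sketch has the right general shape for the AKPSS-style argument --- a semi-random multi-round process with per-round density $p_t$, residual degrees $D_t^{(s)}$ tracked for each residual edge size, and girth $\ge 5$ (linearity plus no Berge $3$- or $4$-cycles) used to decouple events and obtain concentration. But it is a plan, not a proof, and you yourself pinpoint the hole: you never actually control the intermediate residual degrees $D_t^{(s)}$ for $2\le s\le r$. You observe that a crude bound lets them grow polynomially in $t$, which would collapse the gain to Caro--Tuza order $d^{-1/r}n$, and you state that the needed concentration (each $D_t^{(s)}$ and its variance staying within a constant factor of the mean-field trajectory over $\Theta(\log d)$ rounds) ``requires showing'' --- but you do not show it. You also never solve the coupled recursions for the optimal $p_t$, and you gesture at ``carrying the explicit constants through'' to produce $0.36\cdot 10^{-5/r}$ without doing any computation; the claim that the ``$5$'' in the exponent traces to the girth hypothesis is an unsupported guess. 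A smaller issue: the cleaning step (deleting the at-most-$n/2$ vertices of degree $>2d$ together with their incident edges) costs a factor of $2$ in $n$ and a factor $2^{1/r}$ in the $d$-dependence, and you do not verify that the target constant absorbs this loss. In short, the skeleton is plausible, but the quantitative core --- exactly the part you flagged as hard --- is missing.
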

Based on this theorem, Duke, Lefmann and R\"odl~\cite{DLR} showed that the same bound(with different constant) holds for linear hypergraphs. 
 
\subsection{Main Theorem}

In this paper, we extend the ideas of Gamarnik and Goldberg~\cite{GG} to hypergraphs. First, it is convenient to define the following: 
Let $u(d,r)$ be the only positive real number that satisfies the following equation:
\begin{equation}\label{ud}
    \sum_{n\ge0}\binom{n+d-2}{d-2}\frac{u(d,r)^{rn+1}}{rn+1}=1.
\end{equation}
Define
\begin{equation}
\epsilon = \epsilon(g,d,r) = \frac{d(d-1)^{\lfloor\frac{g-3}{2}\rfloor}}{r\sum_{k=1}^{\lfloor\frac{g-1}{2}\rfloor}(k+\frac{1}{r})}.
\end{equation}
Our main theorem is as follows: 

\begin{thm}\label{Thm1}
    For any integers $r\geq1$, $d\geq2$ and $g\geq4$, let $G$ be an $(r+1)$-uniform $d$-regular hypergraph with $n$ vertices and girth $g$, let $\mathcal{I}$ be the independent set of $G$ generated by the greedy algorithm.
    Let
    \begin{equation}
    	f(d,r)=u(d,r)-\frac{u(d,r)^{r+1}}{r+1}.
    \end{equation}
Then
    \begin{equation}\label{main}
    	(f(d,r)-\epsilon)n\le\E[|\I|]\le(f(d,r)+\epsilon)n,
    \end{equation}
\end{thm}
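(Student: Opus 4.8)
The plan is to track the randomized greedy process by a continuous-time analogue, exactly in the spirit of Gamarnik and Goldberg, and to estimate the probability that a fixed vertex $v$ survives into $\I$. As described in the excerpt, the greedy algorithm is equivalent to the weight process: equip each vertex with an i.i.d.\ uniform weight on $[0,1]$, process vertices in increasing order of weight, and add a vertex to $\I$ precisely if, at the moment it is processed, none of the edges through it has already been completed by previously selected vertices. Equivalently, run time $t$ from $0$ to $1$; at time $t$ the vertices of weight $\le t$ have been examined, and $v$ (with weight $t$) enters $\I$ iff for every edge $e \ni v$, at least one of the other $r$ vertices of $e$ was \emph{deleted} before time $t$ (i.e.\ was the completing vertex of some edge, or belonged to an edge already completed). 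So I want to compute, for a vertex $v$ of weight $t$, the probability $q(t)$ that $v$ is still "active'' (not yet deleted), and then $\E[|\I|] = n\int_0^1 (\text{prob.\ } v \text{ enters } \I \mid \text{weight } t)\,dt$.

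The heart of the argument is a recursive/self-consistent description of the survival probability on the infinite $(d,r)$-branching structure (the Berge-tree that locally looks like the girth-$g$ hypergraph). For a vertex $v$ of weight $t$ in this tree, $v$ is deleted by time $t$ iff some edge $e \ni v$ gets "completed'' by time $t$, which happens iff all $r$ other vertices of $e$ are, by the time the last of them is examined, already committed to $\I$ — and each of those neighbors is in $\I$ iff, recursively, it survived in its own subtree and was the first in $e$ among the active ones. Writing $p(t)$ for the probability that a vertex of weight $t$ is active at time $t$ in the infinite tree (no boundary effects), one derives an integral equation of the form $p(t) = \exp\!\big(-d\int_0^t (\text{completion rate})\big)$, and a short computation shows the completion-rate term is a polynomial in $\int_0^t p$, leading to an ODE whose solution, after the substitution matching $u(d,r)$ to the endpoint $t=1$, produces exactly the series in \eqref{ud} and the value $f(d,r)$ for $\E[|\I|]/n$ on the tree. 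The combinatorial bookkeeping here — counting, for an edge with $r$ other vertices each rooting a $(d-1)$-ary sub-branch, the probability that the edge is completed, and checking that $\sum_{n\ge 0}\binom{n+d-2}{d-2}\frac{u^{rn+1}}{rn+1}=1$ falls out — is the main technical obstacle, and the binomial coefficient $\binom{n+d-2}{d-2}$ should appear as the number of ways to distribute "load'' among the $d-1$ further edges at each vertex.

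Once the infinite-tree value $f(d,r)$ is identified, the second task is to control the error for a genuine $d$-regular hypergraph of finite girth $g$. Here the key observation is locality: whether $v$ enters $\I$ depends only on the weights of vertices within Berge-distance about $g/2$ of $v$, and for $r\le \lfloor (g-1)/2\rfloor$ or so this neighborhood is a tree (no Berge cycle of length $<g$), hence isomorphic to the corresponding ball in the infinite tree. The difference between the true survival probability and $p(t)$ is therefore bounded by the probability that the "influence'' reaches depth $\lfloor (g-1)/2\rfloor$ before it is killed — a branching-process tail. Counting the number of vertices/edges at each level ($d(d-1)^{k-1}$ at level $k$) against the factor $\prod_{k=1}^{\lfloor (g-1)/2\rfloor}(k+\tfrac1r)^{-1}$ coming from the integral bounds at each level of the recursion gives precisely $\epsilon(g,d,r) = \dfrac{d(d-1)^{\lfloor (g-3)/2\rfloor}}{r\sum_{k=1}^{\lfloor (g-1)/2\rfloor}(k+\frac1r)}$; one then integrates over $t\in[0,1]$ and multiplies by $n$ to get \eqref{main}. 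I would carry this out by defining truncated recursions $p_k(t)$ (survival in a depth-$k$ tree) with $p_0 \equiv 1$, showing $p_k(t)$ is sandwiched between $p(t)$ and $p(t)\pm(\text{tail at level }k)$ by a monotone induction in $k$, and noting that the true hypergraph probability is exactly $p_{\lfloor (g-1)/2\rfloor}(t)$-like up to the first Berge cycle.

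The step I expect to be the real obstacle is making the recursion for the continuous process rigorous rather than heuristic: the events "edge $e$ is completed by time $t$'' for the $d$ edges at $v$ are \emph{not} independent in general (they share $v$, and deeper down the sub-branches overlap only in the tree case), so I need the tree/large-girth hypothesis to decouple them, and I need to be careful that "being in $\I$'' is not simply "surviving'' — a vertex can survive yet fail to be selected because a sibling in one of its edges is selected first. Setting up the right state variable (active vs.\ deleted vs.\ selected, tracked as a function of the weight) so that the recursion closes, and verifying the resulting ODE has $u(d,r)$ as the relevant solution at $t=1$, is where the bulk of the careful work lies; the finite-girth error estimate is then a comparatively routine, if slightly tedious, truncation argument.
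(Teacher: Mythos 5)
Your high-level strategy matches the paper's: reduce the global problem to a local one on a rooted hypertree via a locality/influence argument with error controlled by a branching tail, then analyze the tree process by a recursion leading to an ODE, and finally solve for the fixed point. The girth-error side of your plan is essentially the paper's, down to the $d(d-1)^h$ vs.\ $\prod_{k\le h+1}(k+\tfrac1r)$ comparison (the paper makes this precise through \emph{influence-blocking hypergraphs}: the minimal influence-blocking hypergraph $\block(v)$ depends only on the weights in a small ball except with probability bounded by the number of length-$(h+1)$ paths from $v$ times the probability any one of them is ``increasing,'' Lemmas~\ref{NumOfIP} and~\ref{ProbOfE}).

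The genuine gap is in the tree recursion, and it is exactly the piece you flag as the ``real obstacle.'' First, the state variable. A scalar ``survival probability at time $t$'' does not give a closed recursion, because whether an edge at $v$ is completed depends on the \emph{weights} at which $v$'s neighbours are selected, not just on whether they are selected; the conditional probability $q(t)=\P[v\in\I\mid W_v=t]$ satisfies a relation involving the antiderivative of $q$ in the subtrees, not $q$ itself. The paper resolves this with the \emph{bonus function}
$S_T(v)=W_v\prod_{e\in DE(v)}I(W_v>\min_{u\in e,u\neq v}S_T(u))$,
whose value is $W_v$ if $v$ is selected and $0$ otherwise (Lemma~\ref{POfBF}), and then tracks its full distribution function $F_{d,h}(x)=\P[S_{T(d,h)}(\gamma)\le x]$. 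This gives the clean closed recursion
$F_{d,h}(x)=1-\int_x^1\bigl[1-(1-F_{d,h-1}(t))^r\bigr]^d\,dt$,
which is \emph{not} of the exponential form $\exp(-d\int\cdots)$ you guessed; the $d$ enters as a power, not through the exponential map. Second, and related: your worry that ``a vertex can survive yet fail to be selected because a sibling is selected first'' is a misconception — in the weight-ordered process a vertex that reaches its own turn undeleted is always added to $\I$ — but the genuine subtlety it points at is the weight-ordering dependence above, which is precisely what $S_T$ encodes. Third, the convergence of the truncated recursion as $h\to\infty$ is not a straight sandwiching argument: the sequence $F_{d,h}$ \emph{oscillates} (Lemma~\ref{IneqOfF}), so the paper has to show separately that the even- and odd-indexed limits agree, which it does by combining the Cauchy estimate from the locality bound (Lemma~\ref{limexist}) with a derivative comparison of the two limit functions (proof of Lemma~\ref{DefEq}). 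Without these three pieces — the bonus function and its distributional recursion, the correct (non-exponential) integral form, and the two-sided oscillating convergence — the heart of the argument remains a heuristic, and the claimed value $f(d,r)=u(d,r)-\frac{u(d,r)^{r+1}}{r+1}$ with $u$ satisfying~\eqref{ud} would not be established.

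One minor further point: the condition you write as ``for $r\le\lfloor(g-1)/2\rfloor$ this neighbourhood is a tree'' is off; the radius within which the neighbourhood is a hypertree is governed by $g$ alone, independent of $r$, and you also need to distinguish the two trees $T(d,h)$ and $\tilde T(d,h)$ (the root has degree $d$ but interior vertices have one ascending edge), which is why $u(d,r)$ appears with $d-1$ descending edges and the final formula for $f$ comes from a different tree than the one defining $u$.
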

In particular, due to the form of the quantity $\epsilon = \epsilon(g,d,r)$, this theorem is effective for fixed $d$ and large $g$, and shows
\begin{equation}
    \alpha(G)\geq(f(d,r)-\epsilon)n.
\end{equation}
For $r=1$, this coincides with Theorem~\ref{GG}. We prove in Appendix A that as $d \rightarrow \infty$, 
\begin{equation}
f(d,r) \sim \Bigl(\frac{\log d}{rd}\Bigr)^{\frac{1}{r}},
\end{equation}
and so if $g$ is large enough relative to $d$, then this slightly improves the constant in (\ref{AKPSS}) asymptotically as $d \rightarrow \infty$.

\medskip

Our second result shows that the size of the independent set generated by the greedy algorithm concentrate around its mean asymptotically almost surely for linear 
hypergraphs with bounded degree (i.e. hypergraphs that are not necessarily regular):

\begin{thm}\label{Thm2}
	For any integers $r\geq1$ and $d\geq2$, let $G$ be an $(r+1)$-uniform linear hypergraph with maximum degree $d$ on $n$ vertices, $\I(G)$ be the independent set generated by the greedy algorithm, then for any positive function $b(n)$ with $b(n)\rightarrow\infty$ as $n\rightarrow\infty$, we have
	$$
	\P[||\I(G)|-\E[|\I(G)|]|>\sqrt{n}b(n)]\rightarrow0,\ as\ n\rightarrow\infty.
	$$
\end{thm}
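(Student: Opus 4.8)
The plan is to use the Azuma--Hoeffding inequality applied to a vertex-exposure martingale associated with the random weights driving the greedy algorithm. Recall from the discussion preceding the theorem that the randomized greedy algorithm can be realized by equipping each vertex $v\in V(G)$ with an i.i.d.\ uniform weight $X_v\in[0,1]$ and then running the deterministic selection rule (repeatedly pick the largest-weight surviving vertex, delete every vertex that completes an edge with the chosen set, repeat). Thus $|\I(G)|$ is a deterministic function $F(X_{v_1},\dots,X_{v_n})$ of the $n$ independent random variables $X_{v_1},\dots,X_{v_n}$, where $v_1,\dots,v_n$ is any fixed enumeration of $V(G)$. Let $\mathcal{F}_i=\sigma(X_{v_1},\dots,X_{v_i})$ and set $M_i=\E[\,|\I(G)|\mid\mathcal{F}_i\,]$, so that $M_0=\E[|\I(G)|]$ and $M_n=|\I(G)|$; this $(M_i)$ is a martingale.

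The key step is to bound the martingale differences $|M_i-M_{i-1}|$ by a constant $c=c(r,d)$ independent of $n$. This is a bounded-difference (Lipschitz) estimate: I claim that changing a single weight $X_{v_i}$, while keeping all other weights fixed, changes $|\I(G)|$ by at most $c(r,d)$; standard martingale arguments then give $|M_i-M_{i-1}|\le c(r,d)$ for all $i$. To prove the Lipschitz bound, fix all weights except $X_{v}$ and compare the run with $X_v=x$ to the run with $X_v=x'$. One tracks how the set of selected vertices evolves and argues that the symmetric difference of the two outputs stays within a bounded "region of influence" of $v$: because $G$ is linear with maximum degree $d$, the number of vertices whose fate can be altered by the change at $v$ is controlled by a bounded-radius neighborhood exploration --- each time the discrepancy propagates, it does so through one of the $\le d$ edges at a vertex, and each such edge has only $r+1$ vertices, but one needs to argue the propagation terminates after a bounded number of steps rather than cascading through the whole hypergraph. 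The cleanest way is to show that the discrepancy can only propagate "downward" in weight order: if the two runs first differ at vertex $v$, the only vertices that can subsequently be treated differently are those reachable from $v$ via edges all of whose other vertices have already been processed, and a careful accounting (using linearity so that distinct edges through a processed vertex share only that vertex) caps the total discrepancy by some explicit $c(r,d)$, e.g.\ of order $(rd)^{O(1)}$ or even $rd$. I expect this Lipschitz estimate to be the main obstacle: in hypergraphs the deletion rule is more delicate than in graphs (a vertex is deleted only when \emph{all} the other vertices of some edge are selected), so one must be careful that flipping one weight does not trigger an unbounded chain of changes; the linearity hypothesis is exactly what prevents this, and making that rigorous is the crux.

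Given the bounded differences $|M_i-M_{i-1}|\le c=c(r,d)$, Azuma's inequality yields
\begin{equation*}
\P\bigl[\,\bigl||\I(G)|-\E[|\I(G)|]\bigr|>t\,\bigr]\le 2\exp\!\left(-\frac{t^2}{2nc^2}\right).
\end{equation*}
Finally I would set $t=\sqrt{n}\,b(n)$ with $b(n)\to\infty$: then the exponent is $-b(n)^2/(2c^2)\to-\infty$, so the probability tends to $0$ as $n\to\infty$, which is exactly the claimed concentration. (Note $c$ depends only on $r$ and $d$, which are fixed, so it does not interfere with the limit.) This completes the proof modulo the bounded-difference lemma.
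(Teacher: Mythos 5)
Your approach (vertex-exposure martingale plus Azuma--Hoeffding) is genuinely different from the paper's, which instead bounds $\mathrm{Var}(|\I(G)|)$ by $O(n)$ via a correlation-decay estimate (Lemma~\ref{Var}, built on Lemma~\ref{ProbOfE}) and then applies Chebyshev's inequality. Unfortunately, the bounded-difference lemma you flag as ``the crux'' is not merely hard --- it is false, and this is precisely why the paper cannot (and does not) use Azuma.

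Here is a concrete counterexample already in the graph case $r=1$ (a $2$-uniform linear hypergraph). Let $G$ consist of a complete binary tree $T$ of depth $2m+1$ rooted at $\rho$, together with a pendant vertex $p$ adjacent only to $\rho$; then $\Delta(G)=3$ and $n=2^{2m+2}$. Fix the weights of $T$ so that $W(\rho)$ exceeds the weights of level~$1$, which exceed those of level~$2$, and so on. If $X_p$ is the \emph{largest} weight, the greedy run picks $p$, deletes $\rho$, then picks level~$1$, deletes level~$2$, picks level~$3$, and so on; the output is $\{p\}\cup L_1\cup L_3\cup\cdots\cup L_{2m+1}$ of size $1+\tfrac{2}{3}(4^{m+1}-1)$. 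If instead $X_p$ is the \emph{smallest} weight (all other weights unchanged), greedy picks $\rho$, deletes $p$ and $L_1$, picks $L_2$, deletes $L_3$, etc.; the output is $\{\rho\}\cup L_2\cup L_4\cup\cdots\cup L_{2m}$ of size $\tfrac{1}{3}(4^{m+1}-1)$. Changing the single coordinate $X_p$ therefore changes $|\I(G)|$ by roughly $\tfrac{1}{3}4^{m+1}\approx n/3$. So the worst-case Lipschitz constant is $\Theta(n)$, not $O_{d,r}(1)$, and linearity does not prevent the cascade (here $r=1$, so the hypergraph is trivially linear). With a coordinate-wise oscillation of order $n$, Azuma/McDiarmid gives only the trivial bound $\exp(-O(t^2/n^2))$, which does not tend to $0$ at scale $t=\sqrt{n}\,b(n)$.

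The lesson, and the reason the paper's route succeeds, is that the quantity that is actually small is an \emph{averaged} sensitivity, not a worst-case one: the bad cascading weight configurations are rare (the probability of a long ``increasing path'' is super-exponentially small, as in Lemma~\ref{ProbOfE}), so the covariances $\mathrm{Cov}(X_i,X_j)$ decay fast in the distance between $v_i$ and $v_j$, yielding $\mathrm{Var}(|\I(G)|)=O_{d,r}(n)$. Chebyshev then gives exactly the claimed $o(1)$ bound at scale $\sqrt{n}\,b(n)$; nothing stronger is asserted, so the weaker polynomial tail of Chebyshev is enough. If you want a martingale-style proof you would need a ``typical bounded differences'' or Talagrand-type inequality that exploits the rarity of large cascades rather than a uniform Lipschitz bound; as written, your argument has an unrepairable gap at the step you yourself identified.
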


The rest of this paper is structured as follows. In Section 2, we introduce {\em influence-blocking hypergraphs} and {\em bonus function of hypergraphs}. They are originally notions for graphs from Gamarnik and Goldberg~\cite{GG}, which are generalized to notions for hypergraphs here. In Section 3, we prove Theorem~\ref{Thm1} by using the property of influence-blocking hypergraphs to reduce the problem of estimating 
$\mathbb E[|\I(G)|]$ to a local problem on a rooted hypertree, and then using the bonus function of hypergraphs to establish a differential equation. In Section 4, we use second moment method to prove Theorem~\ref{Thm2}. In the appendix, we present the asymptotic analysis of the quantity $f(d,r)$ from Theorem \ref{Thm1}.

\section{Preliminaries}

Gamarnik and Goldberg~\cite{GG} introduce two notions for graphs, the {\em influence-blocking subgraph} and {\em bonus function}. In this section, we generalize these notions to hypergraphs and discuss their properties.
A {\em hypertree} is a linear hypergraph with no Berge cycle, and a {\em rooted hypertree} is a hypertree in which a special vertex called the {\em root} is singled out. In summary, we show that the performance of the greedy algorithm on hypergraphs with large girth is locally similar to its performance on a rooted hypertree --  note that if a hypergraph has high girth, then for each vertex, its neighbourhood within finite distance looks like a hypertree. Hence, if we can show that the event of a vertex being selected into the independent set is mostly dependent on its neighbourhood within finite distance, then we can simplify the analysis of each vertex into the analysis of the root of a rooted tree. Then we analyze the probability of the root of a rooted hypertree being selected by the randomized greedy algorithm. For ease of analysis, we consider an equivalent way to do the randomized greedy algorithm as follows:
\begin{enumerate}
    \item Equip each vertex with i.i.d.~weight from the uniform distribution on $[0,1]$. Then with probability 1,  all vertices will have distinct weights.
    \item Iteratively select the vertex with largest weight from all remaining vertices of $G$, and add it to the independent set so far, and then delete all remaining vertices that form an edge with the selected vertices, and repeat until no vertices remain.
\end{enumerate}
The strategy is to analyze the probability of each vertex being selected into the independent set. 

\subsection{Influence-blocking hypergraphs}

Garmarnik and Goldberg~\cite{GG} introduce {\em influence-blocking subgraphs}; here we extend this notion to hypergraphs. Suppose we already applied the first step of the greedy algorithm on $G$. That is, the vertices of $G$ are now equipped with distinct weights. Let $v$ be a vertex of $G$, $e$ be an edge of $G$ such that $e$ contains $v$. We say $v$ {\em defeats} $e$ if there is another vertex $v'$ in $e$ such that the weight of $v'$ is smaller than the weight of $v$. That is, $v$ is not the smallest weighted vertex in $e$. Observe that if $v$ defeats all the edges that contains it, then $v$ must be selected into $\I(G)$, since it cannot be deleted according to the rule of the algorithm. In this case, the weight of any other vertex that is not in the neighbourhood of $v$ will not influence the behaviour of $v$. This phenomenon can be generalized to sub-hypergraphs, which gives us the 
following definition:

\begin{definition}
Let $G$ be a hypergraph whose vertices are equipped with distinct weights. An induced sub-hypergraph $H$ of $G$ is called an {\em influence-blocking hypergraph} if for every vertex $v\in V(H)$, and $e\in E(G)\backslash E(H)$ with $v\in e$, $v$ is not the vertex in $e$ with smallest weight.
\end{definition}

If $G$ is a hypergraph whose vertices are already equipped with distinct weights, then we also let $\I(G)$ denote the independent set of $G$ generated by applying the second step of the greedy algorithm to $G$. Let $v$ be a vertex of $G$, such that $v\not\in\I(G)$. If $e$ is an edge of $G$, such that $v\in e$ and $e\subset v\cup\I(G)$, then we say {\em $v$ is deleted by $e$}. The first property of influence-blocking hypergraphs is that the performance of the greedy algorithm inside this sub-hypergraph is not dependent on the performance of the algorithm outside this sub-hypergraph. This phenomenon is described by the following lemma, which is a straightforward modification of Lemma $5$ in~\cite{GG}: 

\begin{lem}\label{POfIBG}
Let $G$ be a hypergraph whose vertices are equipped with distinct weights. Let $H$ be an influence-blocking hypergraph of $G$. Then $\I(H)=\I(G)\cap V(H)$.
\end{lem}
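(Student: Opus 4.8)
The plan is to prove Lemma~\ref{POfIBG} by induction on the number of rounds of the greedy algorithm (equivalently, by strong induction on $|V(G)|$), tracking the key invariant that at every stage the set of remaining vertices of $H$ inside $G$ is exactly the set of remaining vertices that the algorithm would see if it were run on $H$ alone. Throughout I will use the weighted formulation of the algorithm, so that the run of the algorithm on a fixed weighting is deterministic and the vertices are removed in a well-defined order.

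First I would establish the base case and the inductive step simultaneously by looking at the very first vertex $v$ that the algorithm (run on $G$) selects, i.e.\ the globally largest-weight vertex. If $v \notin V(H)$, I would argue that deleting $v$ and all vertices killed by edges through $v$ does not disturb $H$: indeed, $H$ is an induced sub-hypergraph, so any edge $e \in E(G)$ meeting $V(H)$ in $\geq 2$ vertices with $e \not\subseteq V(H)$ still has at least two vertices, and I claim no vertex $w \in V(H)$ can be deleted at this step --- if $w$ were deleted by an edge $e \ni v$ with $e \subseteq \{v\}\cup\I$-so-far, then at this first step $e \subseteq \{v, w\}$, forcing $e=\{v,w\}$ if $r=1$ or, more carefully for general $r$, $e$ has all its vertices among the chosen set and $w$; but since only $v$ is chosen, $e \subseteq \{v,w\}$ so $|e|\le 2$, and the influence-blocking condition says $w$ is not the smallest-weight vertex of $e$, whereas $v$ has the global maximum weight, contradiction (here I need to handle the $r\ge 2$ case by noting an $(r+1)$-edge cannot be contained in a two-element set). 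After deleting, the residual hypergraph $G'$ still has $H$ as an induced sub-hypergraph and $H$ is still influence-blocking in $G'$ (removing edges and vertices outside $H$ only weakens the constraint), so induction applies to $G'$ and $\I(G') = \I(G) \setminus(\text{deleted vertices})$, while $V(H) \subseteq V(G')$, giving $\I(G)\cap V(H) = \I(G')\cap V(H) = \I(H)$.

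If instead $v \in V(H)$, then $v$ is also the largest-weight vertex among $V(H)$, so the algorithm run on $H$ selects $v$ first as well. I would check that the set of vertices of $H$ deleted in $G$ at this step equals the set deleted in $H$ at its first step: a vertex $w\in V(H)$ is deleted in $G$ iff some edge $e\ni v$ with $e\subseteq\{v,w\}$ lies in $G$; by the influence-blocking hypothesis (applied with the vertex $w$ and the edge $e$ --- or with $v$, depending on orientation) together with $v$ being the current maximum, such an $e$ must in fact lie in $E(H)$, so $w$ is deleted in $H$ too, and conversely trivially. Then the residual hypergraphs $G'$ and $H'$ satisfy the same relationship ($H'$ induced and influence-blocking in $G'$), and I close the induction as before.

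The main obstacle I anticipate is the bookkeeping for general uniformity $r\ge 2$: in the graph case of~\cite{GG} ``$v$ is deleted by $e$'' means simply that $e=\{v,w\}$ with $w$ already chosen, but here an edge has $r+1$ vertices and a vertex can only be deleted once \emph{all} other $r$ vertices of some edge have been selected, so I must be careful that the influence-blocking condition --- which only asserts that $v$ is not the \emph{minimum}-weight vertex of $e$ --- genuinely suffices to force $e\in E(H)$ at the moment of deletion. The resolution is that at the moment $w\in V(H)$ is deleted by $e\in E(G)$, every vertex of $e$ except $w$ has already been selected into the independent set, hence has larger weight than all currently-remaining vertices; if $e\notin E(H)$ then $w$ being a vertex of $H$ with $e\in E(G)\setminus E(H)$ violates the influence-blocking property \emph{unless} $w$ is not the smallest in $e$ --- but the smallest-weight vertex of $e$ must be among the remaining vertices at some earlier point, and a short argument comparing the time $w$ is removed with the influence-blocking inequality yields the contradiction. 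I would write this step out carefully and otherwise keep the induction lightweight, since it is ``a straightforward modification of Lemma~5 in~\cite{GG}.''
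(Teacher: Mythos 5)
Your route is genuinely different from the paper's: you induct on the rounds of the algorithm (equivalently on $|V(G)|$), whereas the paper orders $V(H)$ by decreasing weight as $v_1 > v_2 > \cdots > v_m$ and inducts on $i$, showing $v_i \in \I(H) \Leftrightarrow v_i \in \I(G)$ directly. You also correctly isolate and resolve the main technical point for $r\ge 2$: if $w\in V(H)$ is deleted by $e$, then at the moment of deletion every vertex of $e\setminus\{w\}$ was selected while $w$ was still present, hence has larger weight than $w$, so $w$ is the minimum-weight vertex of $e$; influence-blocking then forces $e\in E(H)$. That sub-claim is exactly what also powers the paper's argument.

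However, the bookkeeping via ``residual hypergraph $G'$'' has a concrete gap that your ``main obstacle'' paragraph does not close. You finish the induction via $\I(G')=\I(G)\setminus(\text{vertices removed at step }1)$ where $G'$ is the induced sub-hypergraph on the survivors. That identity is true for $r=1$ (an edge $\{v,w\}$ through the selected $v$ kills $w$ immediately, so $v$'s influence is fully spent) but false for $r\ge 2$: an edge $e\ni v$ with $|e|=r+1\ge 3$ kills nobody at step 1, yet still constrains the rest of the run as a partial edge, while $G'$ has forgotten $e$ entirely. For instance, take the $3$-uniform hypergraph on $\{v,a,b,c\}$ with single edge $\{v,a,b\}$ and weights $v>a>b>c$; then $\I(G)=\{v,a,c\}$, but $G'=G-v$ has no edges so $\I(G')=\{a,b,c\}\ne\I(G)\setminus\{v\}$. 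To repair this you must either work with the ``shrunk'' hypergraph $G''$ whose edges are $\{e\setminus\{v\}:e\in E(G)\}$ on $V(G)\setminus\{v\}$ --- and then separately verify that $H$ is still an \emph{induced}, influence-blocking sub-hypergraph of $G''$ (for which you need the extra observation that no edge $e\ni v$ has $e\setminus\{v\}\subseteq V(H)$, obtained again from influence-blocking plus $v$ being the global maximum) --- or else drop the residual framing and prove your invariant step by step. Either fix is doable, but it is precisely the sort of bookkeeping the paper's weight-rank induction avoids, since that induction never needs to manufacture a residual object at all.
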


\begin{proof}
Let $V(H)=\{v_1,v_2,\dots,v_m\}$, such that $v_1>v_2>\dots>v_m$ (where $v_i>v_j$ means the weight of $v_i$ is larger than the weight of $v_j$). To prove the lemma, it suffices to show that $v_i\in\I(H)$ if and only if $v_i\in\I(G)$, for all $i$ such that $1\le i\le m$. We do that by induction. First, for $i=1$, we have $v_1\in\I(H)$. By the definition of influence-blocking hypergraph, $v_1$ cannot be deleted by edges not in $H$. Since $v_1$ has the largest weight among all vertices of $H$, so it cannot be deleted by edges in $H$ either. Hence, we also have $v_1\in\I(G)$. This completes the base case. Now suppose $1<i\le m$, and the argument holds for all integer less than  $i$. If $v_i\not\in\I(H)$, then $v_i$ must be deleted by an edge $e\in E(H)$ such that $e\backslash v_i$ consists of vertices whose weights are larger than the weight of $v_i$. Then by the inductive assumption, $v_i$ must be deleted by the same edge in the algorithm for $G$. Hence, we have $v_i\not\in\I(G)$. If $v_i\in\I(H)$, then $v_i$ cannot form an edge in $H$ with vertices whose weights are larger than the weight of $v_i$. Hence, by inductive assumption, $v_i$ cannot be deleted by edges in $H$. Also, by the definition of influence-blocking hypergraph, $v_i$ cannot be deleted by edges not in $H$ either. Therefore, we have $v_i\in\I(G)$. This completes the inductive step, and hence the proof of the lemma.
\end{proof}

The second property of the influence-blocking hypergraphs is that any subset of vertices can be extended to a unique minimal influence-blocking hypergraph, which is presented by the following lemma, which is a straightforward modification of Lemma $3$ in~\cite{GG}:

\begin{lem}
Let $G$ be a hypergraph whose vertices are equipped with distinct weights. Let $A$ be such that $A\subset V(G)$, then there exist a unique minimal influence-blocking hypergraph $\block_G(A)$ of $G$ such that $A\subset V(\block_G(A))$. It can be simplified as $\block(A)$ if there is no ambiguity.
\end{lem}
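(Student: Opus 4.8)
The plan is to produce $\block_G(A)$ as an explicit closure of $A$ and then verify directly that it is the smallest influence-blocking hypergraph containing $A$. Since the weights on $V(G)$ are distinct, every edge $e\in E(G)$ has a unique minimum-weight vertex, which I will call $m(e)$. Unwinding the definition, an induced sub-hypergraph $H$ of $G$ is influence-blocking precisely when it has the following property: whenever $v\in V(H)$ and $v=m(e)$ for some edge $e\ni v$, then $e\subseteq V(H)$ (and hence $e\in E(H)$, since $H$ is induced). So I would define, for $S\subseteq V(G)$, the operator $\Phi(S)=S\cup\bigcup\{e\in E(G): m(e)\in S\}$, set $S^\ast=\bigcup_{k\ge0}\Phi^k(A)$ (an increasing chain inside the finite set $V(G)$, hence stabilizing, so that $\Phi(S^\ast)=S^\ast$), and let $\block_G(A)$ be the sub-hypergraph of $G$ induced on $S^\ast$.

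Then I would check the three required points. First, $A\subseteq S^\ast$ is immediate. Second, $\block_G(A)$ is influence-blocking: if $v\in S^\ast$ and $v=m(e)$, then $e\subseteq\Phi(S^\ast)=S^\ast$, so $e$ is already an edge of the induced hypergraph on $S^\ast$, and no edge outside $E(\block_G(A))$ can witness a violation. Third, minimality: let $H'$ be any influence-blocking hypergraph with $A\subseteq V(H')$, and show $\Phi^k(A)\subseteq V(H')$ by induction on $k$. The base case $k=0$ is the hypothesis, and in the inductive step any $v\in\Phi^k(A)\subseteq V(H')$ with $v=m(e)$ forces $e\subseteq V(H')$ by the influence-blocking property of $H'$, so $\Phi^{k+1}(A)\subseteq V(H')$. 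Hence $S^\ast\subseteq V(H')$, and since both are induced sub-hypergraphs of $G$ this means $\block_G(A)$ is a sub-hypergraph of $H'$. In particular $\block_G(A)$ is contained in every influence-blocking hypergraph containing $A$, so it is the unique minimal one.

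I do not anticipate a serious obstacle: as the authors note, this is a straightforward adaptation of Lemma~3 of~\cite{GG}, and the only points needing care are bookkeeping ones — that an induced sub-hypergraph is determined by its vertex set, so that containment of vertex sets coincides with containment of induced sub-hypergraphs, and that distinctness of weights is exactly what makes $m(e)$ well defined for the closure operator. As an alternative to the explicit closure, one can instead define $\block_G(A)$ as the sub-hypergraph induced on $\bigcap_H V(H)$ over all influence-blocking $H$ with $A\subseteq V(H)$ (the family being nonempty since $G$ itself qualifies) and verify directly that this intersection is again influence-blocking: if $v$ lies in every $V(H)$ and $v=m(e)$ with $e\not\subseteq\bigcap_H V(H)$, then $e\not\subseteq V(H_0)$ for some $H_0$ in the family, contradicting that $H_0$ is influence-blocking. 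Either route works; the closure construction has the advantage of also describing $\block_G(A)$ concretely, which may be convenient later.
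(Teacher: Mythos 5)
Your proposal is correct and takes essentially the same approach as the paper: both build $\block_G(A)$ by iteratively closing $A$ under the rule ``if $v$ is already in the set and $v$ is the minimum-weight vertex of an edge $e$, add all of $e$,'' then observe the result is influence-blocking by construction and is forced to lie inside any influence-blocking sub-hypergraph containing $A$. Your write-up is more explicit about the closure operator $\Phi$, the stabilization argument, and the induction for minimality than the paper's terse version, but the underlying argument is the same.
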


\begin{proof}
Pick a set of vertices $V_A$ as following. First, put all vertices of $A$ into $V_A$. Then, we iteratively take edges that are not in $A$ but whose smallest-weighted vertex is in $A$, and put all the vertices of such edges into $V_A$, and then repeat until no edge like this remains. Let $\block(A)$ be the sub-hypergraph of $G$ induced by $V_A$. By definition, $\block(A)$ is an influence-blocking hypergraph of $G$, and is contained in any influence-blocking hypergraph of $G$ that contains $A$. Hence, it is minimal. Also, by the process that it is generated, we can see that it is unique.
\end{proof}

\begin{definition}\label{path}
For any integers $r, l\geq1$, an {\em $(r+1)$-uniform path of length $l$} connecting $v_0$ to $v_{lr}$ is a hypergraph with vertices $\{v_0, v_1,\dots, v_{lr}\}$ and edges $e_k=\{v_{kr}, v_{kr+1},\dots, v_{(k+1)r}\}$ for ${0\leq k\leq l-1}$. If the vertices of a path are weighted and the smallest-weighted vertex in edge $e_k$ is $v_{kr}$ for all $0\leq k\leq l-1$, then we say the weighted path is {\em increasing} from $v_0$ to $v_{lr}$.
\end{definition}

Note that the definition of path here is different from the definition of a {\em Berge path}, which is defined in a similar way as the Berge cycle. 

The following lemma evaluate the probability that a path in a hypergraph is increasing when given a random total order:
\begin{lem}\label{NumOfIP}
For any integers $r, l\geq1$, the number of ways to assign $\{0,1,\dots,lr\}$ as distinct weights to the vertices of an $(r+1)$-uniform paths of length $l$ from $v_0$ to $v_{lr}$ so that it is increasing is
$$\frac{(lr+1)!}{\prod_{k=1}^l(kr+1)}.$$
Hence, for an $(r+1)$-uniform path $P$ of length $l$, if each vertex is equipped with i.i.d.~weight from the uniform distribution on $[0,1]$, then
\begin{equation}\label{PofI}
    \P[\text{$P$ is increasing from $v_0$ to $v_{lr}$} ]=\frac{1}{\prod_{k=1}^l(kr+1)}.
\end{equation}

\end{lem}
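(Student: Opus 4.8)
The plan is to prove the counting formula by induction on $l$ and then read off (\ref{PofI}) from it.

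Write $N_l$ for the number of increasing assignments and $w(\cdot)$ for the weight function. The first step is the observation that in \emph{every} increasing weighting the vertex $v_0$ carries the smallest weight. Indeed, the definition forces $w(v_{kr})<w(v_{(k+1)r})$ for $0\le k\le l-1$ (since $v_{(k+1)r}\in e_k$) as well as $w(v_{kr})<w(v_j)$ whenever $kr<j<(k+1)r$; chaining these inequalities down the ``spine'' $v_0,v_r,v_{2r},\dots$ gives $w(v_0)<w(v_j)$ for every $j\ge1$.

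Next I would fix $v_0$ to receive the smallest weight. The constraint imposed by $e_0$ is then automatic, and since $v_1,\dots,v_{r-1}$ lie in no edge other than $e_0$, these $r-1$ vertices become completely unconstrained. The remaining vertices $v_r,v_{r+1},\dots,v_{lr}$, together with $e_1,\dots,e_{l-1}$, form (after relabelling so that $v_r$ plays the role of the initial vertex) an $(r+1)$-uniform path of length $l-1$, and the surviving constraints say precisely that this path is increasing from $v_r$ to $v_{lr}$. Hence a valid assignment of the remaining $lr$ weights amounts to choosing one of the $N_{l-1}$ increasing orders of that shorter path and then interleaving $v_1,\dots,v_{r-1}$ arbitrarily into the induced linear order on $(l-1)r+1$ vertices; the number of such interleavings is $\binom{lr}{r-1}(r-1)!=(lr)!/((l-1)r+1)!$. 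This yields the recursion
\[
N_l=N_{l-1}\cdot\frac{(lr)!}{((l-1)r+1)!},
\]
and with the base case $N_1=r!$ (one edge, $v_0$ must be smallest) an immediate computation gives $N_l=(lr+1)!/\prod_{k=1}^{l}(kr+1)$. Finally, (\ref{PofI}) is then immediate: i.i.d.\ uniform weights are almost surely distinct and induce a uniformly random linear order on the $lr+1$ vertices, and whether $P$ is increasing depends only on that order, so the probability equals $N_l/(lr+1)!=1/\prod_{k=1}^{l}(kr+1)$.

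The only place needing genuine care is the reduction in the inductive step: one must check that pinning $v_0$ to the minimum really decouples the configuration into an independent copy of the length-$(l-1)$ path plus $r-1$ free vertices, and that the interleaving count is exactly $(lr)!/((l-1)r+1)!$; the rest is routine algebra. (An alternative that sidesteps the induction is to condition on the weights $t_0,t_1,\dots,t_l$ of the spine vertices $v_0,v_r,v_{2r},\dots,v_{lr}$: given these, each of the $r-1$ interior vertices of $e_k$ must independently exceed $t_k$, so
\[
\P[P\text{ is increasing}]=\int_{0<t_0<\cdots<t_l<1}\ \prod_{k=0}^{l-1}(1-t_k)^{r-1}\,dt_0\cdots dt_l,
\]
which evaluates to $\prod_{k=1}^{l}1/(kr+1)$ by integrating out $t_l,t_{l-1},\dots,t_0$ in turn — after removing $t_l,\dots,t_{l-j+1}$ the exponent of $t_{l-j}$ has become $jr$, producing the factor $1/(jr+1)$ — and the count is $(lr+1)!$ times this.)
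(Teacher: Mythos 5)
Your proof is correct. The basic strategy---induction on $l$ after peeling off the first edge---is the same as the paper's, but you execute it more cleanly and more generally. The paper proves only the case $r=2$ explicitly, by a somewhat ad hoc case analysis on the value of $W_2$ (it must be $1$ or $2$, and each case reduces to a length-$(l-1)$ path), then asserts that ``the idea of the proof for the general case is exactly the same.'' You instead observe directly that $v_0$ must carry the global minimum, that $v_1,\dots,v_{r-1}$ are then completely unconstrained, and that the number of ways to interleave them into an increasing length-$(l-1)$ tail is exactly $\binom{lr}{r-1}(r-1)!=(lr)!/((l-1)r+1)!$; this gives the recursion $N_l=N_{l-1}\cdot(lr)!/((l-1)r+1)!$ uniformly in $r$, with no case splitting. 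Your interleaving count is precisely what one would want to make the paper's ``exactly the same'' rigorous. The alternative you sketch---conditioning on the spine weights $t_0<\dots<t_l$ and integrating the density $\prod_{k=0}^{l-1}(1-t_k)^{r-1}$ out from $t_l$ down to $t_0$, picking up the factor $1/(jr+1)$ at the $j$-th step---is a genuinely different, non-inductive route, and it has the minor advantage of delivering the probability $(\ref{PofI})$ directly without first passing through the counting statement. Both of your arguments are sound; the recursion algebra and the base case $N_1=r!=(r+1)!/(r+1)$ check out.
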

\begin{proof}
For simplicity, we only prove this for $r=2$. In this case, we want to show that the number of proper weight assignments for paths of length $l$ is $(2l)!!=\prod_{k=1}^l(2k)$. The idea of the proof for general case is exactly the same. Let $a_l$ be the number of proper weight assignments for $3$-uniform paths of length $l$ with distinct weights from $\{0,1,\dots,2l\}$. Let $W_i$ be the weight of $v_i$. We prove $a_l=(2l)!!$ by induction. First, for $l=1$, $W_0$ has to be $0$, $W_{2}$ can be either $1$ or $2$. So $a_1=2=2!!$. Now for $l\geq 2$, suppose the lemma is true for $l-1$. Then again, $W_{0}$ has to be $0$. $W_2$ is less than all $W_i$ with $i>2$, so $W_2$ is at least the third smallest weight. As a result, $W_{2}=1$ or $2$. When $W_{2}=1$, $W_1$ can be any number in $\{2,3,\dots,2l\}$, and all the other vertices form a $3$-uniform increasing path of length $l-1$. So the number of proper weight assignments of this kind is $(2l-1)a_{l-1}$. When $W_{2}=2$, $W_{1}$ has to be 1, and all the other vertices form a $3$-uniform increasing path of length $l-1$, the number of proper weight assignments of this kind is $a_{l-1}$. Hence, by inductive assumption, we have $a_l=2la_{l-1}=2l\cdot(2l-2)!!=(2l)!!$. This completes the proof for $r=2$.
\end{proof}

For any vertex $v$ and any integer $h\geq 1$, let $N_h(v)$ be the set of vertices $w$ such that there exist a path, as defined in Definition~\ref{path},  connecting $v$ to $w$, whose length is less or equal than $h$. When $h=0$, let $N_0(v)={v}$. The following lemma, which is a modification of Lemma $6$ in~\cite{GG}, show that for any vertex $v$, the probability that the minimal influence-blocking hypergraph containing $v$ is not a sub-hypergraph of $N_h(v)$ converges to $0$ as $h\rightarrow\infty$.

\begin{lem}\label{ProbOfE}
For any integers $r\geq1$ and $d\geq2$, let $G$ be any $(r+1)$-uniform linear hypergraph of maximum degree $d$, and suppose that the vertices are equipped with i.i.d.~weights from the uniform distribution on $[0,1]$. Then for any vertex $v$ and any $h\geq 0$,
$$\P[\block(v)\not\subset N_h(v)]\leq \frac{d(d-1)^h}{r\prod_{k=1}^{h+1}(k+\frac{1}{r})}.$$
\end{lem}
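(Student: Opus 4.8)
The plan is to bound $\P[\block(v)\not\subset N_h(v)]$ by a union bound over all increasing $(r+1)$-uniform paths of length $h+1$ starting at $v$, adapting the graph argument of Gamarnik and Goldberg~\cite{GG}. Two things are needed: (i) a combinatorial reduction showing that $\block(v)\not\subset N_h(v)$ forces such a path to exist; and (ii) the resulting first-moment estimate, obtained by counting these paths and applying Lemma~\ref{NumOfIP}.

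For the reduction (i): if $\block(v)\not\subset N_h(v)$, fix $w\in V(\block(v))\setminus N_h(v)$. The construction of $\block(v)$ exhibits $w$ as the endpoint of an \emph{increasing walk} from $v$ — a list of edges $g_1,\dots,g_s$ with marked vertices $v=y_0,y_1,\dots,y_s=w$ satisfying $y_{k-1}=\min g_k$ and $y_k\in g_k$ for each $k$ — obtained by tracing, backwards from $w$, the edges used in the construction. I would then pass to an increasing walk of minimum length $s$ among all increasing walks from $v$ that end outside $N_h(v)$. Minimality forbids $y_k=y_{k-1}$ (else delete $g_k$), so the weights of $y_0,\dots,y_s$ are strictly increasing; hence these vertices, and therefore the edges $g_1,\dots,g_s$, are pairwise distinct, and linearity gives $g_k\cap g_{k+1}=\{y_k\}$. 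A short case analysis — combining the strictly increasing weights with further shortening moves — then shows $g_i\cap g_j=\varnothing$ whenever $|i-j|\ge2$, so $g_1,\dots,g_s$ is a genuine increasing path in the sense of Definition~\ref{path}. Since $w\notin N_h(v)$, no path from $v$ to $w$ has length $\le h$, so $s\ge h+1$, and $g_1,\dots,g_{h+1}$ is an increasing path of length $h+1$ starting at $v$.

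For (ii): an increasing path of length $h+1$ from $v$ is determined by its edges $e_0,e_1,\dots,e_h$ together with the junction vertices $v_r,v_{2r},\dots,v_{hr}$ (the final vertex $v_{(h+1)r}$ does not enter the definition of being increasing from $v_0$). There are at most $d$ choices for $e_0$, at most $d-1$ for each of $e_1,\dots,e_h$ (each avoids its predecessor), and at most $r$ for each junction $v_{kr}$ inside $e_{k-1}$; hence at most $d(d-1)^hr^h$ such paths. By Lemma~\ref{NumOfIP} each is increasing with probability $1/\prod_{k=1}^{h+1}(kr+1)$, so a union bound yields
$$\P[\block(v)\not\subset N_h(v)] \le \frac{d(d-1)^h r^h}{\prod_{k=1}^{h+1}(kr+1)} = \frac{d(d-1)^h}{r\prod_{k=1}^{h+1}\bigl(k+\tfrac1r\bigr)},$$
using $\prod_{k=1}^{h+1}(kr+1)=r^{h+1}\prod_{k=1}^{h+1}(k+\tfrac1r)$, which is the claimed bound.

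The hard part is the reduction (i). When $r=1$ an increasing walk automatically has distinct vertices, hence is a simple path; but in an $(r+1)$-uniform linear hypergraph two non-consecutive edges of the traced walk may share an interior vertex (already for a suitable Berge triangle), so extracting an honest increasing path of length exactly $h+1$ needs care — it is the minimality of the walk that should make the required disjointness and shortening arguments go through.
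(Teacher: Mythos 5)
Your proposal takes essentially the same route as the paper: union bound over increasing ``paths'' of length $h+1$ from $v$, counting at most $d(d-1)^h r^h$ of them and applying Lemma~\ref{NumOfIP}. Your part~(ii) matches the paper exactly. The difference is that you try to justify part~(i), which the paper simply asserts in one line (``By definition, $\block(v)\not\subset N_h(v)$ if and only if at least one of these path is increasing''); you correctly flag it as ``the hard part.''

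However, your justification of~(i) has a genuine gap --- and it is the same gap the paper's terse argument glosses over. You claim that minimality of the increasing walk, together with ``further shortening moves,'' forces $g_i\cap g_j=\varnothing$ for $|i-j|\ge 2$, so the walk is a path in the sense of Definition~\ref{path}. This fails once the linear hypergraph has short Berge cycles. Concretely, let $r=2$ and take the $3$-uniform hypergraph on vertices $\{y_0,z,y_1,z_2,y_2,y_3,z_4,y_4\}$ with edges $g_1=\{y_0,z,y_1\}$, $g_2=\{y_1,z_2,y_2\}$, $g_3=\{y_2,z,y_3\}$, $g_4=\{y_3,z_4,y_4\}$ (linear, maximum degree $2$, containing the Berge triangle $g_1,g_2,g_3$). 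Give weights $W(y_0)<W(y_1)<W(y_2)<W(z_2)<W(z)<W(y_3)<W(z_4)<W(y_4)$. Then $\block(y_0)$ is the whole vertex set, $N_2(y_0)=\{y_0,y_1,z,z_2,y_2,y_3\}$, so $\block(y_0)\not\subset N_2(y_0)$ ($h=2$). The unique increasing walk from $y_0$ reaching $\{z_4,y_4\}$ is $g_1,g_2,g_3,g_4$ of length~$4$; it has $g_1\cap g_3=\{z\}$, so it is not a path, it cannot be shortened as an increasing walk (no junction of $g_1$ lies in $g_3$ or vice versa), and its length-$3$ prefix $g_1,g_2,g_3$ is also not a path. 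Meanwhile, the only genuine length-$3$ path of Definition~\ref{path} from $y_0$ is $g_1,g_3,g_4$, and it is \emph{not} increasing under these weights since $\min(g_3)=y_2\neq z$. So the reduction ``$\block(v)\not\subset N_h(v)\Rightarrow$ an increasing path of length $h+1$ exists'' is simply false for linear hypergraphs with girth~$3$. Moreover, replacing ``paths'' by ``increasing walks'' does not salvage the probability estimate: for the overlapping walk $g_1,g_2,g_3$ one computes $\P[\text{increasing}]=1/90>1/105=1/\prod_{k=1}^3(2k+1)$, so Lemma~\ref{NumOfIP} does not bound it.

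Two remarks to keep perspective. First, the paper's own proof is even thinner than yours at this step, so you are not worse off; you simply didn't close the gap you correctly identified. Second, the gap is harmless in the place the lemma is actually invoked in Lemma~\ref{LocalProb}: there the girth $g$ satisfies $h_0+1\le (g-1)/2<g$, an overlap $g_i\cap g_j\neq\varnothing$ with $j-i\ge 2$ in a walk of length $h_0+1$ would create a Berge cycle of length $j-i+1\le h_0+1<g$, a contradiction, so every increasing walk of length $h_0+1$ really is a path. But Lemma~\ref{ProbOfE} is also used in Lemma~\ref{Var} for arbitrary linear hypergraphs (girth may equal~$3$), where the reduction as stated does not hold and a different argument is needed (the bound may still be true --- in the example above $\P[\block(y_0)\not\subset N_2(y_0)]=9/840$ while the claimed bound is $64/840$ --- but the path/union-bound argument does not establish it).
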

\begin{proof}
For any vertex $v$ there exist at most $d(d-1)^hr^h$ distinct paths of length $h+1$ that connecting $v$ to some vertex in $N_{h+1}(i)\backslash N_h(i)$. By definition, $\block(v)\not\subset N_h(v)$ if and only if at least one of these path is increasing. So by applying a union bound and equation~(\ref{PofI}), we have

$$
\P[\block(v)\not\subset N_h(v)]
\le\frac{d(d-1)^hr^h}{\prod_{k=1}^{h+1}(kr+1)}
= \frac{d(d-1)^h}{r\prod_{k=1}^{h+1}(k+\frac{1}{r})}.
$$
\end{proof}

\subsection{Bonus function of hypergraphs}

To analyze the probability of the root of a rooted hypertree being selected into the independent set, we use the following notion to establish a recursive equation, and hence by some analysis, a differential equation. 

Consider the following {\em bonus function of hypergraphs}, which is extended from the bonus function of graphs introduced by Garmarnik and Goldberg~\cite{GG}:

\begin{definition}
Let $T$ be a rooted hypertree, whose vertices are equipped with distinct positive weights. Let $W_v$ be the weight of a vertex $v$, $DE(v)$ be the set of descending edges of $v$ and $I$ be the indicator function.
Then the {\em bonus function of hypergraphs} $S_T:V(T)\rightarrow\R$ is defined by
$$
S_T(v)=
\left\{
    \begin{aligned}
    &W_v, &&\text{v is leaf,}\\
    &W_v\prod_{e\in DE(v)}I(W_v>\min_{u\in e,u\not=v}\{S_T(u)\}),\ &&\text{otherwise.}
    \end{aligned}
\right.
$$
\end{definition}

Given a weighted rooted tree, the bonus function value of the root is exactly the weight of the root if the root is selected by the greedy algorithm, and is $0$ if the the root is not selected, as shown by the following lemma:

\begin{lem}\label{POfBF}
Let $T$ be a rooted hypertree, whose vertices are equipped with distinct positive weights. Let $\gamma$ be the root of $T$, $W_\gamma$ be the weight of $\gamma$, then we have
$$S_T(\gamma)=W_{\gamma}I(\gamma\in\I(T)).$$
\end{lem}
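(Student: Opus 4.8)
The plan is to prove the statement by induction on the height of the rooted hypertree $T$, showing simultaneously (or by strengthening the inductive hypothesis) a slightly stronger claim: for \emph{every} vertex $v$ of $T$, if we let $T_v$ denote the rooted subhypertree hanging below $v$ (rooted at $v$), then $S_T(v) = W_v\, I(v \in \I(T_v))$. The point of this strengthening is that the descending edges $DE(v)$ of $v$ in $T$ and the edges of $T$ incident to $v$ ``below'' $v$ are exactly the edges of $T_v$ incident to $v$, so the recursion defining $S_T(v)$ refers only to data inside $T_v$; this makes the induction self-contained. The base case is immediate: if $v$ is a leaf, then $T_v$ is a single vertex, $v \in \I(T_v)$ trivially, and $S_T(v) = W_v$ by definition.

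For the inductive step, fix a non-leaf vertex $v$ and assume the claim holds for all vertices strictly below $v$ — in particular for every $u \neq v$ lying in a descending edge $e \in DE(v)$, we have $S_T(u) = W_u\, I(u \in \I(T_u))$. The key observation to extract is a characterization of when the root $v$ of $T_v$ is selected by the greedy algorithm. I would argue that $v \in \I(T_v)$ if and only if $v$ ``defeats'' every descending edge $e \in DE(v)$ in the following sense: there is a vertex $u \in e$, $u \neq v$, that is itself selected into $\I(T_{u})$ and has weight smaller than $W_v$ (equivalently, $W_v > \min_{u \in e,\, u \neq v} S_T(u)$, once one observes that $S_T(u)$ equals $W_u$ when $u$ is selected and $0 < W_v$ otherwise, so the minimum is governed by the selected vertices of smallest weight). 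The ``only if'' direction: since $T_v$ is a hypertree, the only way $v$ can be deleted is via one of its descending edges once the other $r$ vertices of that edge have been selected; if in every $e \in DE(v)$ the subtree process $\I(T_u)$ fails to select some $u$ with $W_u < W_v$ in time, then when the greedy algorithm reaches $v$ (processing vertices in decreasing weight order) the edge $e$ cannot yet be full, so $v$ survives. The ``if'' direction: if some descending edge $e$ has all its other vertices selected with weights below $W_v$, those vertices are chosen before $v$ is reached, so $e \subseteq \I(T_v) \cup \{v\}$ at the moment $v$ is considered and $v$ is deleted. The independence of the different subtrees $T_u$ — which is what lets us treat each $\I(T_u)$ separately — follows from the fact that $T$ is a hypertree (linear, no Berge cycle), so distinct descending edges share only the vertex $v$ and the subtrees hanging below different children are vertex-disjoint; here I would invoke Lemma \ref{POfIBG}, since each $T_u$ is an influence-blocking sub-hypergraph of $T_v$ minus $v$, giving $\I(T_u) = \I(T_v \setminus v) \cap V(T_u)$.

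Combining the characterization with the inductive hypothesis: $v \in \I(T_v)$ iff for every $e \in DE(v)$ we have $W_v > \min_{u \in e,\, u\neq v} S_T(u)$, i.e.\ iff $\prod_{e \in DE(v)} I\bigl(W_v > \min_{u \in e, u \neq v} \{S_T(u)\}\bigr) = 1$, which is precisely the statement $S_T(v) = W_v\, I(v \in \I(T_v))$ by the definition of the bonus function. Taking $v = \gamma$ and $T_\gamma = T$ yields the lemma.

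The main obstacle I anticipate is making the ``defeat'' characterization of $v \in \I(T_v)$ fully rigorous, in particular handling the fact that whether $u$ is deleted in $T_v$ could a priori differ from whether $u$ is deleted in $T_u$ — this is exactly the role of the influence-blocking / hypertree structure (Lemma \ref{POfIBG}), and one must check that removing $v$ does not affect the selection status of vertices below a child $u$ of $v$ relative to $W_v$, which is where the minimum over $e \setminus \{v\}$ (rather than over all of $e$) and the decreasing-weight processing order must be reconciled carefully. Everything else is bookkeeping on the recursion.
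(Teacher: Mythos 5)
Your overall strategy matches the paper's: induct on height, apply the inductive hypothesis to each subtree $T_u$ below a child $u$ of the root, characterize when the root is selected, and use Lemma~\ref{POfIBG} to transfer the subtree behaviour into the full tree. However, two concrete issues keep this from being a complete proof.

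First, the stated ``defeat'' characterization is false as written, and in particular it is \emph{not} equivalent to $W_v > \min_{u\in e\setminus v}S_T(u)$. You claim $v$ defeats $e$ iff some $u\in e\setminus\{v\}$ is selected into $\I(T_u)$ with $W_u<W_v$. Take $r=2$, $e=\{v,u_1,u_2\}$ with $W_v=0.5$, $W_{u_1}=0.4$, $W_{u_2}=0.6$, where $u_2$ is a leaf and $u_1$ has a single descending edge $\{u_1,a,b\}$ with $W_a=0.7$, $W_b=0.8$. Then $S_T(u_1)=0$ (since $u_1$ is deleted in $T_{u_1}$) and $S_T(u_2)=0.6$, so $\min S_T(u)=0<W_v$ and indeed $v\in\I(T_v)$; but there is no selected $u$ with $W_u<W_v$, so your first form says $v$ does not defeat $e$. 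The point you missed is that $W_v>\min S_T(u)$ can hold because $S_T(u)=0$ for some $u$ \emph{of arbitrary weight}, not only because some small-weight $u$ is selected. Your ``only if'' and ``if'' paragraphs inherit this confusion (e.g.\ vertices of smaller weight are processed \emph{after} $v$, not before, so ``weights below $W_v$'' in the ``if'' direction should be ``above'').

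Second, you flag ``the main obstacle'' --- that $u$'s status in $\I(T_v)$ may differ from its status in $\I(T_u)$ --- but leave it unresolved, and your proposed workaround (viewing $T_u$ as influence-blocking in $T_v\setminus v$) does not help, because $\I(T_v\setminus v)\cap V(T_u)$ need not equal $\I(T_v)\cap V(T_u)$: removing $v$ can genuinely change the selection below a child. The paper closes this gap by dichotomizing on the weight comparison between the root $\gamma$ and the chosen vertex $u$ achieving $S_T(u)<W_\gamma$. When $W_\gamma<W_u$, then $\gamma$ (not $u$) is the small vertex in $e_k$, so $T_u$ is already an influence-blocking sub-hypergraph of $T$ itself and Lemma~\ref{POfIBG} gives $u\notin\I(T)$ directly, with no vertex removal needed; when $W_\gamma>W_u$, no influence-blocking argument is required at all, since $\gamma$ is then not the smallest-weighted vertex in $e_k$ and hence cannot be deleted by $e_k$. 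The same dichotomy, applied to all $u\in e_k\setminus\{\gamma\}$ in the converse direction, handles that case. Making this case split explicit is precisely what is missing from your sketch.
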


\begin{proof}
We prove by induction on the height of the tree. When the height is $0$, this lemma is true. Now suppose $T$ has height $h>0$, and this lemma holds for all trees with height less than $h$. Let $e_k,\ 1\leq k \leq d$, be all descending edges of the root $\gamma$. Then by definition of the bonus function, we have
$$S_T(\gamma)=W_\gamma\prod_{k=1}^{d}I(W_\gamma>\min_{v\in e_k, v\not=\gamma}\{S_T(v)\}).$$
So it suffices to show that
$$\prod_{k=1}^{d}I(W_\gamma>\min_{v\in e_k, v\not=\gamma}\{S_T(v)\})=I(\gamma\in\I(T)).$$
Let $T_{v}$ be the subtree of $T$ with root $v$, such that $T_v$ contains only the edges descending from $v$.
If $W_\gamma>\min_{v\in e_k, v\not=\gamma}\{S_T(v)\}$ for all $k$ such that $1\leq k\leq d$. For an arbitrary $k$, pick $v\in e_k$, $v\not=\gamma$, such that $W_\gamma>S_T(v)$, then there are two cases. Firstly, if $W_\gamma<W_{v}$, then we have $S_T(v)=0$. By inductive assumption, this implies $v\not\in\I(T_{v})$. Then by Lemma \ref{POfIBG}, since $T_v$ is an influence-blocking hypergraph of $T$, we have $v\not\in\I(T)$. This means that $\gamma$ will not be deleted by $e_k$. Secondly, if $W_\gamma>W_{v}$. This also means that $\gamma$ will not be deleted by $e_k$. This argument works for all $1\leq k\leq d$. Therefore, $\gamma\in\I(T)$.

On the other hand, if $W_\gamma<\min_{v\in e_k, v\not=\gamma}\{S_T(v)\}$ for some $k$, then $W_\gamma$ must be the smallest-weighted vertex in $e_k$ and $v\in\I(T_v)$ for all $v\in e_k$, $v\not=\gamma$. Since $T_v$ is an influence-blocking hypergraph of $T$, by Lemma \ref{POfIBG} we have $v\in\I(T)$ for all $v\in e_k$, $v\not=\gamma$. This implies that $\gamma$ will be deleted by $e_k$. Therefore, $\gamma\not\in\I(T)$.
\end{proof}

Let $T(d,h)$ be the $(r+1)$-uniform rooted hypertree such that all non-leaf vertices have $d$ descending edges, and all leaves have depth $h$. Let $\tilde{T}(d,h)$ be the $(r+1)$-uniform rooted hypertree such that the root has $d$ descending edges while all other non-leaf vertices have $d-1$ descending edges, and all leaves have depth $h$.

Let $\gamma$ be the root of $T(d,h)$. Apply the first step of the greedy algorithm to $T(d,h)$, that is,  randomly assign weights to $T(d,h)$. Let $F_{d,h}$ be the distribution function of $S_{T(d,h)}(\gamma)$. That is, $F_{d,h}(x)=\P[S_{T(d,h)}(\gamma)\leq x]$. Similarly, let $\tilde{F}_{d,h}$ be the distribution function of $S_{\tilde{T}(d,h)}(\gamma)$. That is, $\tilde{F}_{d,h}(x)=\P[S_{\tilde{T}(d,h)}(\gamma)\leq x]$. Note that by Lemma \ref{POfBF}, we have
\begin{align}
    &1-F_{d,h}(0)=\P[\gamma\in\I(T(d,h)]\\
    &1-\tilde{F}_{d,h}(0)=\P[\gamma\in\I(\tilde{T}(d,h)]
\end{align}
Also by definition of the bonus function of hypergraphs, $F_{d,h}$ and $\tilde{F}_{d,h}$ satisfy the following recursive equations for all $x\in[0,1]$:
\begin{equation}\label{FdRec}
    F_{d,h}(x)=1-\int^1_x\P[S_{T(d,h)}(\gamma)=W_\gamma|W_\gamma=t]dt=1-\int^1_x[1-(1-F_{d,h-1}(t))^{r}]^d dt
\end{equation}
\begin{equation}\label{tFdRec}
    \tilde{F}_{d,h}(x)=1-\int^1_x\P[S_{\tilde{T}(d,h)}(\gamma)=W_\gamma|W_\gamma=t]dt=1-\int^1_x[1-(1-F_{d-1,h-1}(t))^{r}]^d dt
\end{equation}
In order to get a differential equation, we need to show that $F_{d,h}$ and $\tilde{F}_{d,h}$ converge as $h\rightarrow\infty$. We make use of the following lemma:
\begin{lem}\label{IneqOfF}
For any $x\in\R$ and integer $h\geq0$, the following inequalities hold:
\begin{equation}\label{Os1}
    (-1)^hF_{d,h}(x)\leq(-1)^hF_{d,h+1}(x)
\end{equation}
\begin{equation}\label{Os2}
    (-1)^hF_{d,h}(x)\leq(-1)^hF_{d,h+2}(x)
\end{equation}

\end{lem}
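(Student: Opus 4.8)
The plan is to prove both inequalities simultaneously by induction on $h$, using the recursion~(\ref{FdRec}) together with a monotonicity observation about the map that sends $F_{d,h-1}$ to $F_{d,h}$. Write $\Phi(x) = 1 - \int_x^1 [1 - (1 - G(t))^r]^d\,dt$ for the operator appearing in~(\ref{FdRec}), so that $F_{d,h} = \Phi[F_{d,h-1}]$. The key elementary fact is that $\Phi$ is \emph{order-reversing}: if $G_1(t) \le G_2(t)$ pointwise on $[0,1]$, then $1 - (1 - G_1(t))^r \ge 1 - (1 - G_2(t))^r$ (wait -- this needs care, since $1-(1-G)^r$ is increasing in $G$), so in fact $[1 - (1 - G_1(t))^r]^d \le [1 - (1 - G_2(t))^r]^d$, and hence $\Phi[G_1](x) = 1 - \int_x^1(\cdots G_1 \cdots) \ge 1 - \int_x^1 (\cdots G_2 \cdots) = \Phi[G_2](x)$. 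Thus $\Phi$ reverses the pointwise order on $[0,1]$. Consequently $\Phi^2 = \Phi \circ \Phi$ \emph{preserves} the pointwise order.

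For the base cases I would pin down $F_{d,0}$ and $F_{d,1}$ explicitly. By definition $T(d,0)$ is a single vertex, so $S_{T(d,0)}(\gamma) = W_\gamma$ is uniform on $[0,1]$, giving $F_{d,0}(x) = x$ on $[0,1]$ (and $F_{d,0}(x)=0$ for $x<0$, $=1$ for $x>1$); then $F_{d,1} = \Phi[F_{d,0}]$. The base case of~(\ref{Os1}) for $h=0$ asks $F_{d,0}(x) \le F_{d,1}(x)$, which follows because $\Phi[G](x) \ge \Phi[\mathbf{1}](x)$ is not quite what is needed; instead one checks directly that $F_{d,1}(x) = 1 - \int_x^1[1-(1-t)^r]^d\,dt \ge 1 - \int_x^1 1\,dt = x = F_{d,0}(x)$, since the integrand is at most $1$. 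The base case of~(\ref{Os2}) for $h=0$ asks $F_{d,0}(x) \le F_{d,2}(x)$: apply the order-reversing property of $\Phi$ to the already-established inequality $F_{d,0} \le F_{d,1}$ to get $F_{d,1} = \Phi[F_{d,0}] \ge \Phi[F_{d,1}] = F_{d,2}$, i.e. $F_{d,2} \le F_{d,1}$; combined with $F_{d,0} \le F_{d,1}$ this does \emph{not} immediately give $F_{d,0}\le F_{d,2}$, so I would instead verify $F_{d,0}\le F_{d,2}$ directly or bootstrap from $F_{d,1}\le F_{d,2}$ via a separate small computation. (A cleaner route: prove~(\ref{Os2}) for $h=0$ first by direct estimate, then derive~(\ref{Os1}) for $h=1$ and~(\ref{Os2}) for $h=1$ from it.)

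For the inductive step, suppose~(\ref{Os1}) and~(\ref{Os2}) hold for some $h \ge 0$. Multiplying the assumed $(-1)^h F_{d,h} \le (-1)^h F_{d,h+1}$ by $-1$ and applying the order-reversing operator $\Phi$: if $h$ is even, $F_{d,h} \le F_{d,h+1}$ gives $F_{d,h+1} = \Phi[F_{d,h}] \ge \Phi[F_{d,h+1}] = F_{d,h+2}$, which is exactly $(-1)^{h+1}F_{d,h+1} \le (-1)^{h+1}F_{d,h+2}$; the odd case is symmetric. This advances~(\ref{Os1}). For~(\ref{Os2}), apply $\Phi$ twice (equivalently, use that $\Phi^2$ is order-preserving) to the assumed inequality $(-1)^h F_{d,h} \le (-1)^h F_{d,h+2}$: since $\Phi^2$ preserves order, it preserves the inequality $F_{d,h} \le F_{d,h+2}$ (when $h$ even) to yield $F_{d,h+2} \le F_{d,h+4}$, and likewise in the odd case, which is $(-1)^{h+2}F_{d,h+2} \le (-1)^{h+2}F_{d,h+4}$; shifting the index by one using the same argument applied to~(\ref{Os2}) at level $h+1$ (obtained from~(\ref{Os1}) at levels $h+1, h+2$, which we have just established) closes the induction.

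The main obstacle I anticipate is bookkeeping the parity: the two claimed inequalities are entangled (an alternating/oscillation pattern $F_{d,0} \le F_{d,2} \le \cdots \le F_{d,3} \le F_{d,1}$), and one must be careful that the base cases at $h=0$ genuinely follow from the integrand bound $0 \le [1-(1-t)^r]^d \le 1$ rather than from a circular appeal to monotonicity of $\Phi$. Once the order-reversing property of $\Phi$ is isolated as a clean sub-lemma and the $h=0,1$ cases are checked by hand, the inductive propagation of both~(\ref{Os1}) and~(\ref{Os2}) is routine. I would also note that these inequalities, together with $0 \le F_{d,h} \le 1$, imply that the even-indexed subsequence is nondecreasing, the odd-indexed subsequence is nonincreasing, and each even term lies below each odd term, so both subsequences converge; this is the point of the lemma and sets up the differential equation in the next section, but it is not needed for the proof of the lemma statement itself.
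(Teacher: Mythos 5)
Your core approach is the same as the paper's. The paper's ``take the difference of the recursions at consecutive heights and track the sign'' step is precisely your observation that the map $\Phi$ sending $F_{d,h-1}$ to $F_{d,h}$ via~(\ref{FdRec}) is order-reversing, and the alternating inequalities then propagate by induction; your explicit base-case checks ($F_{d,0}(x)=x$ on $[0,1]$ and $F_{d,h}(x)\ge x$ for $h\ge1$ since $0\le[1-(1-t)^r]^d\le1$) supply the details the paper leaves to ``by definition.''

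One small slip in your inductive step for~(\ref{Os2}): the claim that~(\ref{Os2}) at level $h+1$ is ``obtained from~(\ref{Os1}) at levels $h+1,h+2$'' is false --- those two give $(-1)^{h+1}F_{d,h+1}\le(-1)^{h+1}F_{d,h+2}$ and $(-1)^{h+1}F_{d,h+3}\le(-1)^{h+1}F_{d,h+2}$, which bounds both by the middle term but does not compare $F_{d,h+1}$ with $F_{d,h+3}$. The $\Phi^2$ detour is also unnecessary: simply apply the order-reversing $\Phi$ once to the inductive hypothesis $(-1)^hF_{d,h}\le(-1)^hF_{d,h+2}$ to get $(-1)^{h+1}F_{d,h+1}\le(-1)^{h+1}F_{d,h+3}$ directly, exactly parallel to your step for~(\ref{Os1}). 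With that fix your argument is the paper's argument (its ``same reasoning gives the proof for inequality~(\ref{Os2})'').
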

\begin{proof}
We prove inequality (\ref{Os1}) by induction. First, when $h=0$, by definition we have $F_{d,0}(x)\leq F_{d,1}(x)$. Now for $h\geq1$, suppose inequality (\ref{Os1}) holds for $h-1$. Replace $h$ by $h+1$ in equality (\ref{FdRec}) and consider its difference with the original equality, we have
$$F_{d,h+1}(x)-F_{d,h}(x)=\int^1_x\l(\l(1-(1-F_{d,h-1}(t))^{r}\r)^d-\l(1-(1-F_{d,h}(t))^{r}\r)^d\r)dt$$
Using this equation, we can check that when $F_{d,h}(x)\geq F_{d,h-1}(x)$, we have $F_{d,h+1}(x)\leq F_{d,h}(x)$; and when $F_{d,h}(x)\leq F_{d,h-1}(x)$, we have $F_{d,h+1}(x)\geq F_{d,h}(x)$. Hence, by inductive assumption, we have $(-1)^hF_{d,h}(x)\leq(-1)^hF_{d,h+1}(x)$. This completes the proof for inequality (\ref{Os1}). Same reasoning gives the proof for inequality (\ref{Os2}).
\end{proof}

\begin{cor}\label{CofF}
There exist functions $F_{d,even}(x):\R\rightarrow[0,1]$ and $F_{d,odd}(x):\R\rightarrow[0,1]$ such that the sequence of functions $\{F_{d,2k}(x)\}_{k\ge0}$ converges pointwise to $F_{d,even}(x)$ and the sequence of functions $\{F_{d,2k+1}(x)\}_{k\ge0}$ converges pointwise to $F_{d,odd}(x)$, and $F_{d,even}(x)\le F_{d,odd}(x)$ for all $x\in\R$.
\end{cor}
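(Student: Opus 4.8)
The plan is to deduce Corollary~\ref{CofF} directly from Lemma~\ref{IneqOfF} together with the boundedness of the functions $F_{d,h}$. First I would record the trivial but essential fact that $0 \le F_{d,h}(x) \le 1$ for every $h$ and every $x$: each $F_{d,h}$ is a distribution function of a random variable supported on $[0,1]$ (indeed $S_{T(d,h)}(\gamma) \in [0,1]$ since weights lie in $[0,1]$ and the bonus function is either a weight or $0$), so the sequences $\{F_{d,2k}(x)\}_{k\ge 0}$ and $\{F_{d,2k+1}(x)\}_{k\ge 0}$ are bounded for each fixed $x$.

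Next I would fix $x$ and use inequality~(\ref{Os2}) from Lemma~\ref{IneqOfF}. Taking $h = 2k$ there gives $F_{d,2k}(x) \le F_{d,2k+2}(x)$, so the even-indexed sequence is nondecreasing; taking $h = 2k+1$ gives $-F_{d,2k+1}(x) \le -F_{d,2k+3}(x)$, i.e.\ $F_{d,2k+1}(x) \ge F_{d,2k+3}(x)$, so the odd-indexed sequence is nonincreasing. A bounded monotone sequence of reals converges, so for each $x$ we may define $F_{d,even}(x) := \lim_{k\to\infty} F_{d,2k}(x)$ and $F_{d,odd}(x) := \lim_{k\to\infty} F_{d,2k+1}(x)$; both limits lie in $[0,1]$ since they are limits of values in $[0,1]$. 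This gives the claimed pointwise convergence and the stated codomains.

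Finally I would establish $F_{d,even}(x) \le F_{d,odd}(x)$ for all $x$. Apply inequality~(\ref{Os1}) with $h = 2k$: this yields $F_{d,2k}(x) \le F_{d,2k+1}(x)$ for every $k$. Letting $k \to \infty$ on both sides and using the two convergences just established gives $F_{d,even}(x) \le F_{d,odd}(x)$, as desired. (Alternatively one can combine the monotonicity of the two subsequences with any single comparison $F_{d,2k}(x)\le F_{d,2k+1}(x)$ to squeeze the even limit below the odd limit; the point is only that the even sequence increases up to its limit while staying under the odd sequence, which decreases down to its limit.)

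I do not anticipate a genuine obstacle here: the corollary is a soft consequence of ``bounded monotone sequences converge'' once the oscillation inequalities of Lemma~\ref{IneqOfF} are in hand, and the monotone-boundedness argument is completely routine. The only mild care needed is bookkeeping the parity of $h$ in the two inequalities of Lemma~\ref{IneqOfF} — making sure that (\ref{Os2}) is what delivers the monotonicity of each fixed-parity subsequence and that (\ref{Os1}) is what interleaves the even terms below the odd terms — and noting that pointwise (not uniform) convergence is all that is asserted, so no equicontinuity or Dini-type argument is required at this stage.
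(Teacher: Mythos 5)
Your proof is correct and follows essentially the same route as the paper: monotonicity of the even- and odd-indexed subsequences from inequality~(\ref{Os2}), convergence of bounded monotone sequences, and the interleaving inequality from~(\ref{Os1}) with $h=2k$. The only cosmetic difference is that you obtain boundedness from the observation that each $F_{d,h}$ is a distribution function valued in $[0,1]$, whereas the paper cites~(\ref{Os1}) for boundedness; both are valid and lead to the same conclusion.
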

\begin{proof}
    As a result of inequality~(\ref{Os2}), for any $x\in\R$, the sequence $\{F_{d,2k}(x)\}_{k\geq 0}$ is increasing and the sequence $\{F_{d,2k+1}(x)\}_{k\geq 0}$ is decreasing. Also, by inequality~(\ref{Os1}), both sequences are bounded. Hence, by the Monotone Convergence Theorem~\cite{R}, they must converge, which implies the existence of $F_{d,even}(x)$ and $F_{d,odd}(x)$. The inequality can be obtained by considering the inequality~(\ref{Os1}) with $h=2k$ and $k\rightarrow\infty$.
\end{proof}
Similar results as Lemma~\ref{IneqOfF} and Corollary~\ref{CofF} for $\tilde{F}_{d,h}$ can also be obtained using the same idea, and we omit the details.

\section{Proof of Theorem~\ref{Thm1}}
The following lemma, which is a modification of Theorem $7$ in~\cite{GG}, provide an upper bound for the difference between the probability that a vertex $v$ in a hypergraph $G$ is selected and the probability that the root $\gamma$ of a rooted hypertree is selected by the greedy algorithm, showing that the performance of the greedy algorithm on $G$ is locally similar to that on a hypertree.
\begin{lem}\label{LocalProb}
	For any integers $r\geq1$, $d\geq2$ and $g\geq4$, let $G$ be an $(r+1)$-uniform $d$-regular hypergraph with girth $g$. Let $h_0=\lfloor \frac{g-3}{2}\rfloor$,  $T=\tilde{T}(d,h)$ with $h\geq h_0+1$, let $\gamma$ be the root of $T$. Then for every vertex $v\in V(G)$,
	\begin{equation}\label{LP}
	    |\P[v\in \I(G)]-\P[\gamma\in \I(T)]|\leq \frac{d(d-1)^{h_0}}{r\prod_{k=1}^{h_0+1}(k+\frac{1}{r})}.
	\end{equation}
\end{lem}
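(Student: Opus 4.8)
The plan is to couple the execution of the greedy algorithm on $G$ near the vertex $v$ with its execution on the rooted hypertree $T=\tilde T(d,h)$ near the root $\gamma$, and to show the two coupled executions agree unless a rare ``bad'' event occurs, whose probability is bounded by the right-hand side of (\ref{LP}). First I would invoke Lemma~\ref{POfIBG}: whether $v\in\I(G)$ is completely determined by the weighted induced sub-hypergraph $\block_G(v)$, since $\I(\block_G(v))=\I(G)\cap V(\block_G(v))$ and $v\in\I(G)\iff v\in\I(\block_G(v))$. The same holds for $\gamma$ in $T$. So it suffices to compare $\block_G(v)$ with $\block_T(\gamma)$. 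By Lemma~\ref{ProbOfE} applied to $G$ (which has maximum degree $d$ since it is $d$-regular), the event $E_G:=\{\block_G(v)\not\subset N_{h_0}(v)\}$ has probability at most $\frac{d(d-1)^{h_0}}{r\prod_{k=1}^{h_0+1}(k+1/r)}$; the analogous event $E_T$ on the tree side has at most this probability too (in fact the tree is a subcase, and $h_0+1\le h$ guarantees the $N_{h_0}$-ball is ``interior'').

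The key structural point is that, because $G$ has girth $g$ and $h_0=\lfloor(g-3)/2\rfloor$, the ball $N_{h_0+1}(v)$ in $G$ is isomorphic (as an $(r+1)$-uniform hypergraph) to the corresponding truncated ball around $\gamma$ in $\tilde T(d,h)$: a Berge cycle shorter than $g$ would be needed to create any coincidence of paths of length $\le h_0+1$ emanating from $v$, and the degree condition makes each non-root vertex contribute exactly $d-1$ further descending edges while $v$ (resp.\ $\gamma$) contributes $d$, which matches the definition of $\tilde T$. I would make this isomorphism $\phi$ explicit and then couple the weight assignments: give corresponding vertices equal weights (the remaining vertices of $G$ and of $T$ get independent uniform weights). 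On the event $\neg E_G\cap\neg E_T$ — more precisely, on the event that $\block_G(v)\subset N_{h_0}(v)$ and $\block_T(\gamma)\subset N_{h_0}(\gamma)$ — the two influence-blocking hypergraphs are identified by $\phi$ together with their weights, hence $v\in\I(G)\iff\gamma\in\I(T)$. Therefore $|\P[v\in\I(G)]-\P[\gamma\in\I(T)]|\le\P[E_G]+\P[E_T]$ in a naive union bound; to get the \emph{single} copy of the bound in (\ref{LP}) rather than twice it, I would instead argue that under the coupling the indicator variables $I(v\in\I(G))$ and $I(\gamma\in\I(T))$ differ only on $E_G\cup E_T$, and because the coupling makes the $N_{h_0+1}$-balls and their weights literally identical, the event $E_G$ and the event $E_T$ coincide as events in the coupled probability space (they are both determined by whether some increasing path of length $h_0+1$ out of the common ball exists, using only weights inside the identified region). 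Hence the symmetric difference is exactly one event of probability at most $\frac{d(d-1)^{h_0}}{r\prod_{k=1}^{h_0+1}(k+1/r)}$, giving (\ref{LP}).

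I expect the main obstacle to be the last point: verifying cleanly that, under the coupling, the ``escape'' events on the $G$-side and the $T$-side are the \emph{same} event, so that only one copy of $\epsilon$ appears. This requires checking that whether $\block(v)$ stays inside $N_{h_0}$ is decided purely by the weights of vertices within distance $h_0+1$ (equivalently, by which paths of length $\le h_0+1$ are increasing), which is immediate from the construction of $\block$ in the proof of the minimality lemma — each step of that construction adjoins the vertices of an edge whose minimum-weight vertex is already in the set, so reaching outside $N_{h_0}$ forces an increasing path of length $h_0+1$. Combined with the girth-based isomorphism of the $N_{h_0+1}$-balls and the identification of weights there, both escape events become the identical statement ``one of the $\le d(d-1)^{h_0}r^{h_0}$ common boundary paths is increasing.'' The remaining steps — the explicit ball isomorphism from the girth hypothesis, and the bookkeeping that $h\ge h_0+1$ means truncation of $T$ at depth $h$ does not interfere with the depth-$(h_0+1)$ comparison — are routine and I would state them without belaboring the calculations.
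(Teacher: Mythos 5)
Your proposal is correct and follows essentially the same route as the paper: couple the weights on $N_{h_0+1}(v)$ and $N_{h_0+1}(\gamma)$ via the girth-induced isomorphism of balls, use Lemma~\ref{POfIBG} to reduce membership in $\I$ to the influence-blocking subhypergraph, bound the escape event via Lemma~\ref{ProbOfE}, and observe that under the coupling the escape events on the two sides coincide so that only a single copy of the error term appears. The paper phrases the last point by splitting $\P[v\in\I(G)]$ on the good/bad event and then reversing the roles of $G$ and $T$, while you phrase it via the symmetric difference of indicators, but these are the same argument in substance.
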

\begin{proof}
	We apply the first step of greedy algorithm on $G$ and $T$ in the following way. We first give vertices of $G$ i.i.d.~weights from the uniform distribution on $[0,1]$. Observe that $N_{h_0+1}(v)$ is a $\tilde{T}(d,h_0+1)$ hypertree, so we can find an isomorphism $f$ that maps $N_{h_0+1}(v)$ to $N_{h_0+1}(\gamma)$. Then we give the vertices in $N_{h_0+1}(\gamma)$ the same weight as their coimage in $N_{h_0+1}(v)$. Finally we give all remaining vertices in $T$ i.i.d.~weights from the uniform distribution on $[0,1]$. Then we apply the second step of greedy algorithm on both $G$ and $T$ to get $\I(G)$ and $\I(T)$. In this setting, we have the following estimate:
	\begin{align*}
	\P[v\in\I(G)]
	=&\P[v\in\I(G),\block_G(v)\subset N_{h_0}(v)]+\P[v\in\I(G),\block_G(v)\not\subset N_{h_0}(v)]\\
	=&\P[\gamma\in\I(T),\block_{T}(\gamma)\subset N_{h_0}(\gamma)]+\P[v\in\I(G),\block_G(v)\not\subset N_{h_0}(v)]\tag{\text{Lemma~\ref{POfIBG}}}\\
	\leq&\P[\gamma\in\I(T)]+\P[\block_G(v)\not\subset N_{h_0}(v)].
	\end{align*}
	This implies that
	\begin{align*}
	\P[v\in\I(G)]-\P[\gamma\in\I(T)]&\leq\P[\block_G(v)\not\subset N_{h_0}(v)]\\
	&\leq \frac{d(d-1)^{h_0}}{r\prod_{k=1}^{h_0+1}(k+\frac{1}{r})}\tag{\text{Lemma~\ref{ProbOfE}}}.
	\end{align*}
	 We complete the proof by repeating the reasoning above with the roles of $\P[v\in\I(G)]$ and $\P[\gamma\in\I(T)]$ reversed.	
\end{proof}

Using similar idea as in the proof above, we can also show that the following limits exist:
\begin{lem}\label{limexist}
	For any fixed integer $d$, the limits
	$\lim_{h\rightarrow\infty}{\P[\gamma\in \I({T}(d,h))]}$ and $\lim_{h\rightarrow\infty}{\P[\gamma\in \I(\tilde{T}(d,h))]}$
	exist, where $\gamma$ denote the root of the rooted hypertrees.
\end{lem}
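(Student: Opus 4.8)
The plan is to show that the sequences $a_h := \P[\gamma\in\I(T(d,h))]$ and $\tilde a_h := \P[\gamma\in\I(\tilde T(d,h))]$ are Cauchy, by coupling the weight assignments on $T(d,h)$ and $T(d,h')$ for $h'>h$ in the same way as in the proof of Lemma \ref{LocalProb}. Fix $h$ and $h'>h$. Since both $T(d,h)$ and $T(d,h')$ agree on the ball of radius $h$ around the root (both look like $T(d,h)$ there), we may put i.i.d.\ uniform weights on $T(d,h)$ and then transfer those weights to the corresponding vertices of the radius-$h$ ball in $T(d,h')$, filling in the remaining vertices of $T(d,h')$ with fresh i.i.d.\ uniform weights. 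Under this coupling, on the event $\{\block_{T(d,h)}(\gamma)\subset N_{h-1}(\gamma)\}$ the minimal influence-blocking hypergraph of $\gamma$ is contained in the common part of the two trees, so by Lemma \ref{POfIBG} the status of $\gamma$ (selected or not) is identical in the two trees. Hence $|a_h - a_{h'}| \le \P[\block_{T(d,h)}(\gamma)\not\subset N_{h-1}(\gamma)]$.

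The second step is to bound this probability exactly as in Lemma \ref{ProbOfE}: from the root of $T(d,h)$ there are at most $d(d-1)^{h-1}r^{h-1}$ distinct $(r+1)$-uniform paths of length $h$ leaving $N_{h-1}(\gamma)$, and $\block(\gamma)\not\subset N_{h-1}(\gamma)$ forces at least one of them to be increasing, so by \eqref{PofI} and a union bound,
\begin{equation}
|a_h - a_{h'}| \le \frac{d(d-1)^{h-1}r^{h-1}}{\prod_{k=1}^{h}(kr+1)} = \frac{d(d-1)^{h-1}}{r\prod_{k=1}^{h}(k+\frac1r)}.
\end{equation}
The right-hand side tends to $0$ as $h\to\infty$ (the factorial-type product in the denominator dominates the exponential numerator), uniformly in $h'$, so $(a_h)$ is Cauchy and therefore converges; the identical argument with $\tilde T$ in place of $T$ (noting that $\tilde T(d,h)$ and $\tilde T(d,h')$ also agree on the radius-$h$ ball about the root) gives convergence of $(\tilde a_h)$.

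The only mild subtlety—and the step I would be most careful about—is making the coupling precise: one must check that $N_{h-1}(\gamma)$ in $T(d,h)$ is genuinely isomorphic (as a weighted rooted hypergraph) to its image in $T(d,h')$, including the branching numbers of internal vertices, and that "$\block(\gamma)$ stays inside $N_{h-1}(\gamma)$'' really is an event measurable with respect to the shared weights, so that conditioning on it yields the same outcome on both sides via Lemma \ref{POfIBG}. This is routine given the machinery already developed (it is the same device used in Lemma \ref{LocalProb}), so I would state it briefly and refer back. Note this also incidentally shows $a_{\mathrm{even}} := \lim_k a_{2k}$ and $a_{\mathrm{odd}} := \lim_k a_{2k+1}$ coincide, consistent with—and in fact slightly strengthening—Corollary \ref{CofF} evaluated at $x=0$.
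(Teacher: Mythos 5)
Your proposal is correct and matches the paper's argument: the paper also proves that $\{\P[\gamma\in\I(\tilde T(d,h))]\}_h$ is Cauchy by ``using the same idea as in the proof of Lemma~\ref{LocalProb},'' arriving at the same bound $\frac{d(d-1)^{h-1}}{r\prod_{k=1}^{h}(k+\frac1r)}$ on $|a_h-a_{h'}|$. You have simply made the coupling and the appeal to Lemmas~\ref{POfIBG} and~\ref{ProbOfE} explicit, which the paper leaves implicit.
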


\begin{proof}
	 We only present the proof of the existence of $\lim_{h\rightarrow\infty}{\P[\gamma\in \I(\tilde{T}(d,h))]}$. The proof of the existence of $\lim_{h\rightarrow\infty}{\P[\gamma\in \I({T}(d,h))]}$ is similar and we omit the details. Let $h$, $h'$ be positive integers with $h'>h$. Using the same idea as in the proof of Lemma~\ref{LocalProb}, we can show that
	$$
	|\P[\gamma\in \I(\tilde{T}(d,h))]-\P[\gamma\in \I(\tilde{T}(d,h'))]|\leq \frac{d(d-1)^{h-1}}{r\prod_{k=1}^{h}(k+\frac{1}{r})}\rightarrow0\ \text{as $h\rightarrow\infty$}.
	$$
	So we conclude that the sequence $\{\P[\gamma\in\I(\tilde{T}(d,h))]\}_{h\geq 1}$ is a Cauchy sequence and therefore has a limit.
\end{proof}

Now we are ready to show that $F_{d,h}(x)$ and $\tilde{F}_{d,h}(x)$ converge, and hence get the differential equations we need:
\begin{lem}\label{DefEq}
	there exist functions $F_{d}(x)$ and $\tilde{F}_{d}(x)$such that $F_{d,h}(x)$ converges pointwise to $F_{d}(x)$ and $\tilde{F}_{d,h}(x)$ converges pointwise to $\tilde{F}_{d}(x)$ as $h\rightarrow\infty$. $F_{d}(x)$ and $\tilde{F}_{d}(x)$ satisfy the following equations:
	\begin{equation}\label{FdE}
		F_{d}(x)=1-\int^1_x[1-(1-F_{d}(t))^{r}]^d dt,
	\end{equation}
	\begin{equation}\label{tFdE}
		\tilde{F}_{d}(x)=1-\int^1_x[1-(1-F_{d-1}(t))^{r}]^d dt.
	\end{equation}	
\end{lem}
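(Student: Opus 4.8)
The plan is to deduce full pointwise convergence of $F_{d,h}$ from the subsequential convergence already recorded in Corollary~\ref{CofF}, then obtain the corresponding statement for $\tilde F_{d,h}$ almost for free, and finally read off the two integral equations by passing to the limit in the recursions (\ref{FdRec}) and (\ref{tFdRec}).

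First I would pass to the limit in (\ref{FdRec}) separately along even and odd heights. The integrand $[1-(1-F_{d,h-1}(t))^r]^d$ is bounded by $1$, and by Corollary~\ref{CofF} we have $F_{d,2k}\to F_{d,even}$ and $F_{d,2k+1}\to F_{d,odd}$ pointwise, so the bounded convergence theorem gives the coupled pair
\begin{equation*}
F_{d,even}(x)=1-\int_x^1[1-(1-F_{d,odd}(t))^r]^d\,dt,\qquad F_{d,odd}(x)=1-\int_x^1[1-(1-F_{d,even}(t))^r]^d\,dt.
\end{equation*}
Subtracting these and writing $\Delta(x)=F_{d,odd}(x)-F_{d,even}(x)$, which is nonnegative by Corollary~\ref{CofF}, yields $\Delta(x)=\int_x^1\big([1-(1-F_{d,odd}(t))^r]^d-[1-(1-F_{d,even}(t))^r]^d\big)\,dt$, and the integrand here is nonnegative since $y\mapsto[1-(1-y)^r]^d$ is nondecreasing on $[0,1]$.

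The key input is Lemma~\ref{limexist}: since the full sequence $\P[\gamma\in\I(T(d,h))]=1-F_{d,h}(0)$ converges, its even and odd subsequences have a common limit, so $F_{d,even}(0)=F_{d,odd}(0)$, i.e. $\Delta(0)=0$. Therefore the nonnegative integrand above integrates to $0$ over $[0,1]$, hence vanishes almost everywhere, hence $\Delta(x)=\int_x^1 0\,dt=0$ for every $x$. So $F_{d,even}=F_{d,odd}=:F_d$, the full sequence $F_{d,h}$ converges pointwise to $F_d$, and either of the two displayed equations is exactly (\ref{FdE}). The same reasoning applies with $d$ replaced by $d-1$, so $F_{d-1,h}\to F_{d-1}$ pointwise as well; substituting this into (\ref{tFdRec}) and applying bounded convergence once more gives $\tilde F_{d,h}(x)\to 1-\int_x^1[1-(1-F_{d-1}(t))^r]^d\,dt=:\tilde F_d$, which is (\ref{tFdE}). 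Note that no even/odd splitting is needed for $\tilde F_{d,h}$, because its recursion refers to $F_{d-1,h-1}$ rather than to $\tilde F_{d,h-1}$.

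I expect the main obstacle to be precisely the step of excluding genuine asymptotic oscillation between even and odd heights, since Corollary~\ref{CofF} on its own leaves open that $F_{d,even}\neq F_{d,odd}$. The mechanism that closes this gap is that Lemma~\ref{limexist} pins down a single coordinate ($x=0$) of the two candidate limit functions, and then the monotonicity of the update map $y\mapsto[1-(1-y)^r]^d$ combined with the sign of $\Delta$ propagates that single equality to all $x\in[0,1]$ through the integral representation of $\Delta$. The only routine points to check along the way are the interchanges of limit and integral, all of which follow from the uniform bound $0\le[1-(1-F_{d,h-1}(t))^r]^d\le 1$.
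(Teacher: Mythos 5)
Your argument is correct and, for the existence of $F_d$, is essentially the paper's proof in a slightly streamlined form: the paper differentiates the two coupled integral identities and then integrates from $0$ to $x$ to conclude $F_{d,even}\ge F_{d,odd}$, whereas you work directly with $\Delta(x)=F_{d,odd}(x)-F_{d,even}(x)=\int_x^1\bigl([1-(1-F_{d,odd})^r]^d-[1-(1-F_{d,even})^r]^d\bigr)\,dt$, note the integrand is nonnegative, and use $\Delta(0)=0$ to kill it almost everywhere. Both hinge on exactly the same two inputs — Lemma~\ref{limexist} to pin $x=0$, and monotonicity of $y\mapsto[1-(1-y)^r]^d$ to propagate — so this is the same route, just without the detour through derivatives.

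Where you do genuinely improve on the paper's presentation is the $\tilde F_d$ case. The paper says the argument is ``similar'' and earlier remarks that analogues of Lemma~\ref{IneqOfF} and Corollary~\ref{CofF} hold for $\tilde F_{d,h}$, implicitly suggesting one reruns the even/odd oscillation machinery. You observe that this is unnecessary: because the recursion (\ref{tFdRec}) expresses $\tilde F_{d,h}$ in terms of $F_{d-1,h-1}$ rather than $\tilde F_{d,h-1}$, pointwise convergence of $F_{d-1,h}$ plus bounded convergence immediately yields pointwise convergence of $\tilde F_{d,h}$ and equation (\ref{tFdE}) in one step, with no need for a monotonicity or oscillation lemma for $\tilde F_{d,h}$. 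That is a small but clean simplification.
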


\begin{proof}
	We only present the proof of the existence of $F_d$ here. The proof of the existence of $\tilde{F}_{d}$ is similar and we omit the details. By Corollary \ref{CofF}, there exist $F_{d,even}(x)$ and $F_{d,odd}(x)$ such that $F_{d,2k}(x)$ converges pointwise to $F_{d,even}(x)$ and $F_{d,2k+1}(x)$ converges pointwise to $F_{d,odd}(x)$ as $k\rightarrow\infty$. Hence, to prove the existence of $F_d$, it suffices to show that $F_{d,even}(x)=F_{d,odd}(x)$ for all $x\in\R$. By Lemma \ref{limexist}, $\lim_{h\rightarrow\infty}{\P[\gamma\in\I(T(d,h))]}$ exists. Since $F_{d,h}(0)=1-\P[\gamma\in\I(T(d,h))]$, this implies that $\lim_{h\rightarrow\infty}{F_{d,h}(0)}$ exists. So we have
	$$F_{d,even}(0)=\lim_{h\rightarrow\infty}{F_{d,h}(0)}=F_{d,odd}(0).$$
	Now consider equation~(\ref{FdRec}) with $h=2k$, and let $k$ go to infinity on both sides, and then use the Dominated Convergence Theorem~\cite{R}, we have
	$$F_{d,even}(x)=1-\int^1_x[1-(1-F_{d,odd}(t))^{r}]^d dt.$$
	Similarly, we also have
	$$F_{d,odd}(x)=1-\int^1_x[1-(1-F_{d,even}(t))^{r}]^d dt.$$
	Take the derivative on both sides and then take the difference of these two equations, we have
	$$F'_{d,even}(x)-F'_{d,odd}(x)=[1-(1-F_{d,odd}(x))^{r}]^d-[1-(1-F_{d,even}(x))^{r}]^d\ge 0,$$
	where the inequality comes from the fact that $F_{d,even}\leq F_{d,odd}$ by Corollary~\ref{CofF}. So for any fixed $x\in[0,1]$,
	$$F_{d,even}(x)=F_{d,even}(0)+\int^x_0F'_{d,even}(t)dt\geq F_{d,odd}(0)+\int^x_0F'_{d,odd}(t)dt=F_{d,odd}(x).$$
	This combined with the inequality $F_{d,even}\leq F_{d,odd}$, implies $F_{d,even}=F_{d,odd}$. This completes the proof of the existence of $F_d(x)$. Now consider equations~(\ref{FdRec}) and~(\ref{tFdRec}), let $h\rightarrow\infty$ and then use the Dominated Convergence Theorem~\cite{R}, we get the desired differential equations.
\end{proof}

\begin{lem}\label{Fd}
	For any integer $d\ge 3$, let $G_d(x)=1-F_{d-1}(x)$, then $G_d(x)$ satisfies the following equation:
    \begin{equation}
    	1-\sum_{n\geq0} \binom{n+d-2}{d-2}\frac{G_d(x)^{rn+1}}{rn+1}=x.
    \end{equation}
\end{lem}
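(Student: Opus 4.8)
The plan is to convert the integral identity of Lemma~\ref{DefEq} into a first-order ODE for $G_d$, recognize the power series in the statement as an explicit antiderivative dictated by that ODE, and then fix the constant of integration using the boundary value at $x=1$.

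First I would specialize equation~(\ref{FdE}) to $d-1$ in place of $d$, which gives $F_{d-1}(x)=1-\int_x^1\bigl[1-(1-F_{d-1}(t))^r\bigr]^{d-1}\,dt$; since the integrand is continuous, $F_{d-1}$ is $C^1$ on $[0,1]$. Setting $G_d=1-F_{d-1}$ this reads $G_d(x)=\int_x^1(1-G_d(t)^r)^{d-1}\,dt$, and differentiating yields
\[
G_d'(x)=-\bigl(1-G_d(x)^r\bigr)^{d-1},\qquad G_d(1)=0 .
\]

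Next I would use the negative binomial identity $\sum_{n\ge0}\binom{n+d-2}{d-2}z^n=(1-z)^{-(d-1)}$ for $|z|<1$. Writing $H(y)=\sum_{n\ge0}\binom{n+d-2}{d-2}\frac{y^{rn+1}}{rn+1}$, so that the left-hand side of the claimed identity is $1-H(G_d(x))$, we have $H'(y)=\sum_{n\ge0}\binom{n+d-2}{d-2}y^{rn}=(1-y^r)^{-(d-1)}$, and hence, by the chain rule,
\[
\frac{d}{dx}\bigl(1-H(G_d(x))\bigr)=-H'(G_d(x))\,G_d'(x)=-(1-G_d(x)^r)^{-(d-1)}\cdot\bigl(-(1-G_d(x)^r)^{d-1}\bigr)=1 .
\]
Therefore $1-H(G_d(x))=x+C$, and evaluating at $x=1$, where $G_d(1)=0$ gives $H(0)=0$ and so $1-H(G_d(1))=1$, forces $C=0$. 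This is exactly the claimed equation. (Consistency check: at $x=0$ it reduces to $\sum_{n\ge0}\binom{n+d-2}{d-2}\frac{G_d(0)^{rn+1}}{rn+1}=1$, i.e.\ $G_d(0)=u(d,r)$ by~(\ref{ud}).)

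The only delicate point — and it is routine — is the bookkeeping that legitimizes the term-by-term differentiation of $H(G_d(x))$, namely that $G_d(x)\in[0,1)$ for $x\in[0,1]$. From $G_d'\le0$ and $G_d(1)=0$ the function $G_d$ is nonincreasing and nonnegative on $[0,1]$, so one only needs $G_d(0)<1$; this follows because $G_d(0)=\int_0^1(1-G_d(t)^r)^{d-1}\,dt$ and the integrand is strictly below $1$ on $[0,1)$ (equivalently, one may invoke uniqueness for the ODE above, whose only solution passing through the value $1$ is the constant $1$). With $G_d$ taking values in $[0,1)$, the series for $H$ converges on the relevant range and may be differentiated termwise, and all the steps above go through.
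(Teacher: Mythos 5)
Your proposal is correct and follows essentially the same route as the paper: differentiate the integral equation~(\ref{FdE}) for $G_d$ to obtain the ODE $G_d'=-(1-G_d^r)^{d-1}$, observe that $H_d'(y)=(1-y^r)^{-(d-1)}$ so $H_d(G_d(x))$ has derivative $-1$, and fix the constant at $x=1$. The extra care you take to verify $G_d\in[0,1)$ (to justify termwise differentiation) is a sensible addition but not a departure in method.
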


\begin{proof}
	By equation~(\ref{FdE}), we have
	$$
	G_d(x)=\int_{x}^{1}(1-G_d(t)^r)^{d-1}dt.
	$$
	Taking derivatives on both sides, we have
	$$
	G_d'(x)=-(1-G_d(x)^{r})^{d-1}.
	$$
	Let $H_d(x)=\sum_{n\geq0} \binom{n+d-2}{d-2}\frac{x^{rn+1}}{rn+1}$, it is not hard to check that $H_d'(x)=\frac{1}{(1-x^r)^{d-1}}$. So the equation above is equivalent to
	$$
	\l(H_d\l(G_d(x)\r)\r)'=-1.
	$$
	Solving this equation, we obtain
	$$\sum_{n\geq0} \binom{n+d-2}{d-2}\frac{G_d(x)^{rn+1}}{rn+1}=-x+C$$
	Let $x=1$, we have $0=-1+C$, which implies $C=1$. This completes the proof.
\end{proof}

\begin{lem}\label{tFd}
    For any integer $d\ge 3$, let $\tilde{G}_d(x)=1-\tilde{F}_d(x)$, then we have the following equation:
    \begin{equation}
        \tilde{G}_d(x)=G_d(x)-\frac{G_d(x)^{r+1}}{r+1}
    \end{equation}
\end{lem}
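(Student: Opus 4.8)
The plan is to relate $\tilde{G}_d$ directly to $G_d$ using the recursive equations established in Lemma~\ref{DefEq}. Recall that $\tilde{F}_d$ satisfies equation~(\ref{tFdE}), namely $\tilde{F}_d(x) = 1 - \int_x^1 [1-(1-F_{d-1}(t))^r]^d\,dt$. Writing $\tilde{G}_d(x) = 1 - \tilde{F}_d(x)$ and $G_d(x) = 1 - F_{d-1}(x)$, this becomes
\begin{equation*}
\tilde{G}_d(x) = \int_x^1 \bigl(1 - G_d(t)^r\bigr)^d\,dt.
\end{equation*}
So it suffices to show that the right-hand side equals $G_d(x) - \frac{G_d(x)^{r+1}}{r+1}$.

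First I would differentiate both sides. The left side gives $\tilde{G}_d'(x) = -(1-G_d(x)^r)^d$. For the right side, using the chain rule and the fact from the proof of Lemma~\ref{Fd} that $G_d'(x) = -(1-G_d(x)^r)^{d-1}$, the derivative of $G_d(x) - \frac{G_d(x)^{r+1}}{r+1}$ is
\begin{equation*}
G_d'(x)\bigl(1 - G_d(x)^r\bigr) = -(1-G_d(x)^r)^{d-1}\bigl(1 - G_d(x)^r\bigr) = -(1-G_d(x)^r)^d,
\end{equation*}
which matches $\tilde{G}_d'(x)$ exactly. Hence the two sides differ by a constant.

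To pin down the constant, I would evaluate at $x = 1$. From equation~(\ref{FdE}) (or directly from the integral formula for $G_d$ used in Lemma~\ref{Fd}), $G_d(1) = \int_1^1 (\cdots)\,dt = 0$, so the right-hand side $G_d(1) - \frac{G_d(1)^{r+1}}{r+1} = 0$. Likewise $\tilde{G}_d(1) = \int_1^1 (1-G_d(t)^r)^d\,dt = 0$. Therefore the constant is zero and the identity $\tilde{G}_d(x) = G_d(x) - \frac{G_d(x)^{r+1}}{r+1}$ holds for all $x \in [0,1]$.

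I do not anticipate a genuine obstacle here; the only point requiring a little care is making sure the differentiation of $\tilde{G}_d$ is justified, i.e.\ that $\tilde{F}_d$ (equivalently $G_d$, equivalently $F_{d-1}$) is continuous so that the fundamental theorem of calculus applies to the integral representation — this follows from the pointwise-limit construction in Lemma~\ref{DefEq} together with the integral equations, which force each $F_d$ to be continuous (indeed $C^1$) on $[0,1]$. Once that is in hand, the argument is just "differentiate, match, check the boundary value."
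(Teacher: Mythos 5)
Your proof is correct. The paper proves the identity in one step by substituting $u = G_d(t)$ in the integral $\tilde{G}_d(x) = \int_x^1 (1-G_d(t)^r)^d\,dt$, using $dt = -du/(1-u^r)^{d-1}$ (which is the same relation $G_d'(t) = -(1-G_d(t)^r)^{d-1}$ you use) and the endpoint $G_d(1)=0$, turning the integral into $\int_0^{G_d(x)}(1-u^r)\,du$. You instead differentiate both candidate expressions, verify the derivatives agree, and match the boundary value at $x=1$. These are really two packagings of the same computation, relying on exactly the same two facts ($G_d' = -(1-G_d^r)^{d-1}$ and $G_d(1)=0$); the paper's substitution is marginally more compact, while your differentiate-and-compare version makes the role of each ingredient slightly more explicit and sidesteps any worry about invertibility or monotonicity of $G_d$ that a change-of-variables argument might invite. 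Your remark about justifying differentiability is the right thing to flag, and it is indeed supplied by the integral representations from Lemma~\ref{DefEq}.
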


\begin{proof}
    By equation~(\ref{tFdE}),
    \begin{equation*}
        \tilde{G}_d(x)=\int^1_x(1-G_d(t)^r)^d dt
    \end{equation*}
    Consider changing the variable in the integral by letting $u=G_d(t)$. By equation~(\ref{FdE}), not hard to see $dt=-\frac{du}{(1-u^r)^{d-1}}$. Hence,
    \begin{equation*}
        \tilde{G}_d(x)=-\int^{G_d(1)}_{G_d(x)}(1-u^r)du=G_d(x)-\frac{G_d(x)^{r+1}}{r+1}
    \end{equation*}
\end{proof}

Now we are ready to prove Theorem 4.

\begin{proof}[Proof of Theorem~\ref{Thm1}]
	Applying inequality~(\ref{LP}), we have
	\begin{align*}
	|\frac{\E[|\I(G)|]}{n}-\P[\gamma\in\I(\tilde{T}(d,h))]|
	&\leq\frac{1}{n}\sum_{v\in V(G)} |\P[v\in \I(G)]-\P[\gamma\in \I(\tilde{T}(d,h))]|\\
	&\leq\frac{d(d-1)^{h_0}}{r\prod_{k=1}^{h_0+1}(k+\frac{1}{r})}
	\end{align*}
	Note that this inequality holds for all $h\geq h_0+1$. Let $h\rightarrow\infty$, we have
	$$
    |\frac{\E[|\I(G)|]}{n}-\lim_{h\rightarrow\infty}{\P[\gamma\in \I(\tilde{T}(d,h))]}|\leq\frac{d(d-1)^{h_0}}{r\prod_{k=1}^{h_0+1}(k+\frac{1}{r})}.
	$$
	Let $f(d,r)=\lim_{h\rightarrow\infty}{\P[\gamma\in \I(\tilde{T}(d,h))]}=\tilde{G}_d(0)$, then we have the required inequality~(\ref{main}). Let $u(d,r)=\lim_{h\rightarrow\infty}{\P[\gamma\in \I({T}(d-1,h))]}=G_d(0)$.  By Lemma~\ref{Fd}, we know that $u(d,r)$ satisfy equation~(\ref{ud}). By Lemma~\ref{tFd}, we have
	\begin{equation*}
	    f(d,r)=\tilde{G}_d(0)=G_d(0)-\frac{G_d(0)^{r+1}}{r+1}=u(d,r)-\frac{u(d,r)^{r+1}}{r+1}.
	\end{equation*}
	This completes the proof.
\end{proof}

\section{Proof of Theorem~\ref{Thm2}}

In section 2, we notice that vertices that are far away from each other are very likely ``independent''. More accurately, if two vertices $u$,$v$ are far away from each other, then the indicator of the event that $u$ is selected and the indicator of the event that $v$ is selected by the greedy algorithm have small covariance. This phenomenon can also be used to give an upper bound for the variance of the algorithm.

\begin{lem}\label{Var}
	For any integers $r\geq1$ and $d\geq2$, let $G$ be an $(r+1)$-uniform linear hypergraph on $n$ vertices with maximum degree $d$, then the variance satisfies:
	\begin{equation}
	    \mathrm{Var}[{\I(G)}]\leq {3d^2r^2e^{r^2(d-1)^3}}n.
	\end{equation}
\end{lem}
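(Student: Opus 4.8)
The plan is to bound the variance by writing $|\I(G)| = \sum_{v \in V(G)} X_v$ where $X_v = I(v \in \I(G))$, and expanding
$$
\mathrm{Var}[|\I(G)|] = \sum_{u,v \in V(G)} \mathrm{Cov}(X_u, X_v).
$$
The key observation is the one emphasized before the lemma statement: the event $\{v \in \I(G)\}$ is essentially determined by the random weights in a bounded neighborhood of $v$. More precisely, recall from Lemma~\ref{POfIBG} that $v \in \I(G)$ is determined by the restriction of the weight assignment to $\block_G(v)$, and from Lemma~\ref{ProbOfE} that $\P[\block_G(v) \not\subset N_h(v)]$ is small. So if $u$ and $v$ are far apart, then with high probability $\block_G(u)$ and $\block_G(v)$ are disjoint, and conditioned on that event $X_u$ and $X_v$ are independent; hence $\mathrm{Cov}(X_u,X_v)$ is bounded by the small probability that the blocking hypergraphs are \emph{not} disjoint.

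The steps, in order, are as follows. First, fix a threshold radius $h$ (to be optimized at the end) and split the covariance sum according to whether $\mathrm{dist}(u,v)$, measured in the path metric of Definition~\ref{path}, is at most $2h$ or more than $2h$. For pairs with $\mathrm{dist}(u,v) > 2h$: on the event $\{\block_G(u) \subset N_h(u)\} \cap \{\block_G(v) \subset N_h(v)\}$ the sets $V(\block_G(u))$ and $V(\block_G(v))$ are disjoint, and on this event $X_u$ and $X_v$ are conditionally independent functions of disjoint collections of i.i.d.\ weights, so a short computation gives $|\mathrm{Cov}(X_u,X_v)| \le 4\,\P[\block_G(u)\not\subset N_h(u) \text{ or } \block_G(v)\not\subset N_h(v)] \le 8 \cdot \frac{d(d-1)^h}{r\prod_{k=1}^{h+1}(k+1/r)}$ by Lemma~\ref{ProbOfE} and a union bound. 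For pairs with $\mathrm{dist}(u,v) \le 2h$: bound the covariance trivially by $1$, and note that each vertex $v$ has at most $N := \sum_{j=0}^{2h} d(d-1)^{j-1} r^j$ (roughly $(r(d-1))^{2h}$ up to lower-order factors) vertices within path-distance $2h$, so the total contribution of these pairs is at most $n \cdot N$.

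Combining, $\mathrm{Var}[|\I(G)|] \le n\Bigl(N + 8n \cdot \frac{d(d-1)^h}{r\prod_{k=1}^{h+1}(k+1/r)}\Bigr)$; but since the second term still carries a factor of $n$, to get a clean $O(n)$ bound one must instead choose $h$ depending on $n$ (say $h \sim c\log n / \log\log n$) so that the factorial denominator $\prod_{k=1}^{h+1}(k+1/r) \ge ((h+1)/e)^{h+1}$ beats $n \cdot d(d-1)^h r^h$, forcing the long-range contribution to be $o(n)$, while $N = n^{o(1)}$ keeps the short-range contribution $n^{1+o(1)}$. The final numerical bound $3d^2r^2 e^{r^2(d-1)^3}n$ then comes from choosing $h$ to be a fixed constant large enough (comparing $d(d-1)^h r^h$ against $r\prod_{k=1}^{h+1}(k+1/r) \approx r\,(h+1)!/r^{h+1}$, one finds $h$ of size about $r(d-1)$ suffices to make the long-range term per pair smaller than $1/n$, yielding an overall $O(n)$ with the stated crude constant after bounding $N \le (r d)^{2h} \le e^{r^2(d-1)^3}$). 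The main obstacle is the bookkeeping in this last optimization: one must track the exponential-in-$h$ growth of the neighborhood size against the super-exponential decay of the increasing-path probability carefully enough to land on the explicit constant $3d^2 r^2 e^{r^2(d-1)^3}$, and in particular to verify that a \emph{constant} (not $n$-dependent) choice of $h$ already suffices because the denominator grows factorially rather than merely exponentially.
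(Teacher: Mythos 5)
Your decomposition $\mathrm{Var}=\sum_{u,v}\mathrm{Cov}(X_u,X_v)$ and the use of Lemma~\ref{POfIBG} together with Lemma~\ref{ProbOfE} to control covariances of distant pairs is exactly the right starting point, and it is the paper's starting point too. But the way you organize the sum has a genuine gap. You fix a \emph{single} threshold radius $h$ and split into pairs at distance $\le 2h$ (trivially bounded, at most $n\cdot N$ with $N\approx (rd)^{2h}$) and pairs at distance $>2h$ (bounded via Lemma~\ref{ProbOfE} by a quantity depending only on $h$). As you yourself observe, the far-pair total then carries an extra factor of $n$, and neither of your two proposed fixes closes the gap: choosing $h$ as a fixed constant cannot make the per-pair long-range bound $<1/n$ (a constant is not $o(1/n)$), and choosing $h$ growing with $n$ (say $h\sim \log n/\log\log n$) makes $N=n^{o(1)}$, so the near-pair total becomes $n^{1+o(1)}$, which is strictly worse than the claimed $O(n)$. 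So a single threshold, however optimized, cannot yield a bound of the form $Cn$ with $C$ independent of $n$.

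The missing idea is to let the radius depend on the \emph{distance between the two vertices}, not on $n$. For a fixed vertex $v_i$, the paper groups the other vertices $v_j$ by their exact distance $\delta$ from $v_i$ and, for $\delta\ge 3$, uses $h=\lfloor(\delta-3)/2\rfloor$ (chosen so that $N_{h+1}(v_i)$ and $N_{h+1}(v_j)$ are disjoint); then by Lemma~\ref{POfIBG} the events $\{v_i\in\I(G)\}\cap\{\block(v_i)\subset N_h(v_i)\}$ and $\{v_j\in\I(G)\}\cap\{\block(v_j)\subset N_h(v_j)\}$ are independent, and Lemma~\ref{ProbOfE} gives
$\mathrm{Cov}(X_i,X_j)\le \frac{2d(d-1)^h}{r\prod_{k=1}^{h+1}(k+1/r)}$.
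Crucially this bound now decays \emph{factorially in $\delta$}. Since the number of vertices at distance exactly $\delta$ from $v_i$ grows only exponentially ($\le d(d-1)^{\delta-1}r^\delta$), the sum over $\delta$ for each fixed $i$ converges to a constant of the shape $O(d^2r^2 e^{r^2(d-1)^3})$, and summing over $i$ gives the linear bound. Your proposal correctly identifies the relevant ingredients (blocking hypergraphs are local, increasing paths are rare), but it is this coupling of the cutoff radius to the pair distance — so that the super-exponential decay in $h$ is spent against the exponential growth in neighborhood size at each scale simultaneously, rather than at one scale only — that you need and do not have.
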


\begin{proof}
Let $V(G)=\{v_1, v_2,\dots,v_n\}$, $X_i=I(v_i\in\I(G))$. Then
\begin{align*}
    \mathrm{Var}(\I(G))&=\mathrm{Var}(\sum_{i=1}^n X_i)\\
    &=\sum_{i=1}^n(\E[X_i^2]-\E[X_i]^2)+\sum_{1\leq i\not=j\leq n}(\E[X_iX_j]-\E[X_i]\E[X_j])\\
    &\leq n+\sum_{1\leq i\leq n}\sum_{\delta\geq1}\sum_{v_j\in N_{\delta}(v_i)\backslash N_{\delta-1}(v_i)}(\E[X_iX_j]-\E[X_i]\E[X_j]),
\end{align*}
where the inequality uses the bound $(\E[X_i^2]-\E[X_i]^2)\leq1$. For any $1\leq i\leq n$, we consider the sum
$$\sum_{\delta\geq1}\sum_{v_j\in N_{\delta}(v_i)\backslash N_{\delta-1}(v_i)}(\E[X_iX_j]-\E[X_i]\E[X_j])$$\\
First, for any $\delta\geq3$, let $h=\lfloor\frac{\delta-3}{2}\rfloor$, and let $A_{i,h}$ denote the event $\{\block(v_i)\not\subset N_h(v_i)\}$, $A_{i,h}^c$ denote the complement of the event $A_{i,h}$, that is $\{\block(v_i)\subset N_h(v_i)\}$. This event is only determined by the weights of vertices in $N_{h+1}(v_i)$. Notice that for every $v_j\in N_{\delta}(v_i)\backslash N_{\delta-1}(v_i)$, $N_{h+1}(v_i)\cap N_{h+1}(v_j)=\emptyset$. So $A_{i,h}^c$ and $A_{j,h}^c$ are independent. Then we have,
\begin{align*}
    \E[X_iX_j]&=\P[v_i\in\I(G), v_j\in\I(G)]\\
    &=\P[v_i\in\I(G), v_j\in\I(G), A_{i,h}^c\cap A_{j,h}^c]+\P[v_i\in\I(G), v_j\in\I(G), A_{i,h}\cup A_{j,h}].
\end{align*}
By Lemma~\ref{POfIBG} and the independence between $A_{i,h}^c$ and $A_{j,h}^c$, we have
\begin{align*}
    \P[v_i\in\I(G), v_j\in\I(G), A_{i,h}^c\cap A_{j,h}^c]&=\P[v_i\in\I(\block(v_i)), v_j\in\I(\block(v_j)), A_{i,h}^c\cap A_{j,h}^c]\\
    &=\P[v_i\in\I(\block(v_i)), A_{i,h}^c]\P[v_j\in\I(\block(v_j)), A_{j,h}^c]\\
    &\le\E[X_i]\E[X_j].
\end{align*}
On the other hand, by Lemma~\ref{ProbOfE}
\begin{align*}
    \P[v_i\in\I(G), v_j\in\I(G), A_{i,h}\cup A_{j,h}]&\le\P[A_{i,h}]+\P[A_{j,h}]\\
    &\le\frac{2d(d-1)^h}{r\prod_{k=1}^{h+1}(k+\frac{1}{r})}
\end{align*}
Hence,
$$
\E[X_iX_j]-\E[X_i]\E[X_j]\le\frac{2d(d-1)^h}{r\prod_{k=1}^{h+1}(k+\frac{1}{r})}
$$
Since $G$ has maximum degree $d$, we have $|N_{\delta}(v_i)\backslash N_{\delta-1}(v_i)|\leq d(d-1)^{\delta-1}r^{\delta}$. \\
In particular, for odd integer $\delta\geq3$, we have $\delta=2h+3$. So the sum
\begin{align*}
    \sum_{odd\ \delta\geq3}\sum_{v_j\in N_{\delta}(v_i)\backslash N_{\delta-1}(v_i)}(\E[X_iX_j]-\E[X_i]\E[X_j])
    &\leq\sum_{h\geq0}d(d-1)^{2h+2}r^{2h+3}\frac{2d(d-1)^h}{r(h+1)!}\\
    &=\frac{2d^2}{d-1}\sum_{h\geq1}\frac{r^{2h}(d-1)^{3h}}{h!}\\
    &\leq2d^2\sum_{h\geq1}\frac{r^{2h}(d-1)^{3h}}{h!}
\end{align*}
For even integer $\delta\geq3$, we have $\delta=2h+4$. So the sum
\begin{align*}
    \sum_{even\ \delta\geq3}\sum_{v_j\in N_{\delta}(v_i)\backslash N_{\delta-1}(v_i)}(\E[X_iX_j]-\E[X_i]\E[X_j])
    &\leq\sum_{h\geq0}d(d-1)^{2h+3}r^{2h+4}\frac{2d(d-1)^h}{r(h+1)!}\\
    &=2d^2r\sum_{h\geq1}\frac{r^{2h}(d-1)^{3h}}{h!}\\
\end{align*}
For $1\leq\delta\leq2$, use the bound $\E[X_iX_j]-\E[X_i]\E[X_j]\leq 1$, we have
$$\sum_{1\leq\delta\leq2}\sum_{v_j\in N_{\delta}(v_i)\backslash N_{\delta-1}(v_i)}(\E[X_iX_j]-\E[X_i]\E[X_j])\leq 2d^2r^2
$$
Combine the three inequalities above, we have
$$\sum_{\delta\geq1}\sum_{v_j\in N_{\delta}(v_i)\backslash N_{\delta-1}(v_i)}(\E[X_iX_j]-\E[X_i]\E[X_j])\leq2d^2r^2e^{r^2(d-1)^3}$$
So the variance
$$\mathrm{Var}(\I(G))\leq n+2d^2r^2e^{r^2(d-1)^3}n\leq3d^2r^2e^{r^2(d-1)^3}n$$
\end{proof}

\begin{proof}[Proof of Theorem \ref{Thm2}]
	By Lemma~\ref{Var}, we know that for fix $d$ and $r$, there exist a constant $c$ such that $\mathrm{Var}(\I(G))<cn$. Hence, by Chebyshev's Inequality we have
	$$
	\P[||\I(G)|-\E[|\I(G)|]|>\sqrt{n}b(n)]\le\frac{\mathrm{Var}(|\I(G)|)}{b(n)^2n}\rightarrow0,\ \text{as $n\rightarrow\infty$}.$$
\end{proof}

\section*{Appendix A}

We first collect some real number inequalities:

\begin{prop}\label{EandI}
Let $n,r,d$ be positive integers. Then 
\begin{center}
\begin{tabular}{lp{4in}}
   $1$. &  For $x \geq 0$, 
   \begin{equation}\label{E1}
        \int_0^xe^{t^{r}}dt=\sum_{n\geq 0}\frac{x^{rn+1}}{n!(rn+1)}
    \end{equation} \\
    $2$. & For $n\le\sqrt{d}$, 
    \begin{equation}\label{I1}
        \Bigl(1+\frac{n}{d}\Bigr)^n<e^{\frac{n^2}{d}}<e
    \end{equation} \\
$3$. & For $y \geq 0$,   
    \begin{equation}\label{I2}
        \Bigl(\frac{y}{n}\Bigr)^n\leq e^{\frac{y}{e}}
    \end{equation}
\end{tabular}
\end{center}
\end{prop}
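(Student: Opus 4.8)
The plan is to establish the three inequalities separately, as each is an elementary one-variable estimate with no interaction between them.

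For (\ref{E1}), I would expand the integrand using the Taylor series of the exponential, $e^{t^{r}} = \sum_{n \ge 0} t^{rn}/n!$. This series converges uniformly on the compact interval $[0,x]$, so term-by-term integration is justified, giving $\int_0^x e^{t^{r}}\,dt = \sum_{n\ge0} \frac{1}{n!}\int_0^x t^{rn}\,dt = \sum_{n\ge0} \frac{x^{rn+1}}{n!(rn+1)}$, which is exactly the claimed identity.

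For (\ref{I1}), the left inequality comes from $1+t < e^{t}$ for $t>0$: taking $t = n/d$ and raising both sides to the $n$-th power yields $(1+n/d)^n < e^{n^2/d}$ (here $n\ge1$, so $t>0$ and the inequality is strict). The right inequality follows because $n \le \sqrt{d}$ forces $n^2/d \le 1$, hence $e^{n^2/d} \le e$, with strict inequality whenever $n < \sqrt{d}$, which is the regime relevant to the application.

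For (\ref{I2}), I would first record the standard scalar fact $\ln z \le z/e$ for all $z > 0$, obtained by noting that $g(z) = z/e - \ln z$ has $g'(z) = 1/e - 1/z$, hence a global minimum at $z = e$ with $g(e)=0$. Assuming $y > 0$ (the case $y=0$ being trivial, since $(0/n)^n = 0 \le 1$), set $z = y/n$; then $\ln\bigl((y/n)^n\bigr) = n\ln z = (y/z)\ln z \le (y/z)\cdot(z/e) = y/e$, and exponentiating gives $(y/n)^n \le e^{y/e}$. None of these steps is a genuine obstacle; the only points needing a word of care are the uniform-convergence justification for interchanging sum and integral in (\ref{E1}), and the substitution $z = y/n$ that reduces (\ref{I2}) to $\ln z \le z/e$.
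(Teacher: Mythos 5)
The paper states Proposition~\ref{EandI} without proof, treating the three facts as standard, so there is no paper argument to compare against; your proofs are correct and are exactly the expected elementary justifications. For~(\ref{E1}), the term-by-term integration is indeed justified by uniform convergence of the exponential series on the compact interval $[0,x]$. For~(\ref{I1}), your derivation from $1+t<e^t$ with $t=n/d>0$ (using that $n\ge1$) is right, and you correctly flag the one imprecision in the statement: when $d$ is a perfect square and $n=\sqrt d$, the upper bound is $e^{n^2/d}=e$, not strictly less than $e$. This is a harmless slip in the proposition as written, since in Proposition~\ref{AsymAnal} the bound is invoked only for $n=O(\log d)$, far below $\sqrt d$. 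For~(\ref{I2}), the reduction to $\ln z\le z/e$ via $z=y/n$ (so $n=y/z$) is the clean route, the scalar inequality follows from the calculus argument you give, and your separate treatment of $y=0$ is appropriate since $\ln(y/n)$ is undefined there. No gaps.
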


Let $u_d=\lim_{h\rightarrow\infty}\P[\gamma\in\I(T(d,h))]=u(d+1,r)$. Note that $u_d$ can be viewed as the probability of the root of $T(d,\infty)$ being selected by the greedy algorithm, while $f(d,r)$ can be viewed as the probability of the root of $\tilde{T}(d,\infty)$ being selected by the greedy algorithm.
\begin{prop}\label{AsymAnal}
	$
	f(d,r)\sim(\frac{\log d}{rd})^{\frac{1}{r}}
	$
	as $d\rightarrow\infty$.
\end{prop}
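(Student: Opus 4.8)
The plan is to analyze the asymptotics of $u(d+1,r)=u_d=G_{d+1}(0)$ first, and then transfer to $f(d,r)$ using Lemma~\ref{tFd}, since $f(d,r)=\tilde G_{d+1}(0)=u_d-\frac{u_d^{r+1}}{r+1}$. Because $u_d\to 0$ as $d\to\infty$ (heuristically, the root of a large tree is hard to select when $d$ is large), the correction term $\frac{u_d^{r+1}}{r+1}$ is of lower order, so $f(d,r)\sim u_d$, and it suffices to prove $u_d\sim\bigl(\tfrac{\log d}{rd}\bigr)^{1/r}$.

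To estimate $u_d$, I would start from the defining relation in Lemma~\ref{Fd} (with $d$ replaced by $d+1$, and evaluated at $x=0$): writing $u=u_d$,
\begin{equation*}
\sum_{n\ge 0}\binom{n+d-1}{d-1}\frac{u^{rn+1}}{rn+1}=1.
\end{equation*}
The key step is to recognize that $\binom{n+d-1}{d-1}=\frac{(n+d-1)!}{n!(d-1)!}\approx \frac{d^n}{n!}$ when $n$ is small compared to $\sqrt d$, so the left-hand side is well-approximated by $\sum_{n\ge0}\frac{(d u^r)^n}{n!(rn+1)}\cdot u = u\int_0^1 e^{d u^r t^r}\,dt$ after using part 1 of Proposition~\ref{EandI} (the identity $\int_0^x e^{t^r}dt=\sum_{n\ge0}\frac{x^{rn+1}}{n!(rn+1)}$, rescaled). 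Thus, up to controlled error, $u_d$ should satisfy
\begin{equation*}
u\int_0^1 e^{d u^r t^r}\,dt\approx 1,\qquad\text{equivalently}\qquad \int_0^{u d^{1/r}} e^{s^r}\,ds\approx d^{1/r}.
\end{equation*}
Since $\int_0^X e^{s^r}ds\sim \frac{e^{X^r}}{rX^{r-1}}$ as $X\to\infty$, setting $X=u_d d^{1/r}$ one gets $e^{X^r}\approx r X^{r-1} d^{1/r}$, hence $X^r\approx \log d^{1/r}+(r-1)\log X+\log r=\tfrac1r\log d+O(\log\log d)$, which gives $X^r\sim\tfrac1r\log d$, i.e. $u_d^r d\sim \tfrac1r\log d$, i.e. $u_d\sim\bigl(\tfrac{\log d}{rd}\bigr)^{1/r}$.

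The main obstacle is making the approximation $\binom{n+d-1}{d-1}\approx d^n/n!$ rigorous across the whole sum and turning the heuristic fixed-point equation into rigorous upper and lower bounds on $u_d$. For the upper bound on the tail, one uses $\binom{n+d-1}{d-1}\le \frac{(n+d-1)^n}{n!}$ together with $\bigl(1+\tfrac{n}{d}\bigr)^n< e$ (part 2 of Proposition~\ref{EandI}) for $n\le\sqrt d$, and a separate crude estimate (via part 3, $(y/n)^n\le e^{y/e}$) to show the contribution of $n>\sqrt d$ is negligible once we know $u_d$ is roughly $(\log d/(rd))^{1/r}$; for the lower bound one uses $\binom{n+d-1}{d-1}\ge \frac{(d-1)^n}{n!}$ directly. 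The cleanest route is a two-step argument: first prove the a priori bound $u_d=d^{-1/r+o(1)}$ by sandwiching the sum between $u\cdot\frac{1}{rn+1}$-type partial sums, then feed this back in to show the sum is $(1+o(1))\,u\int_0^1 e^{du^rt^r}dt$, and finally solve the resulting asymptotic equation using the standard Laplace-type asymptotics of $\int_0^X e^{s^r}ds$ and invert to get the stated asymptotic. I would also verify at the end that $\frac{u_d^{r+1}}{r+1}=o(u_d)$, which is immediate since $u_d\to0$, completing $f(d,r)\sim u_d\sim\bigl(\tfrac{\log d}{rd}\bigr)^{1/r}$.
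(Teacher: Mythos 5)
Your plan matches the paper's proof very closely: analyze $u_d=u(d+1,r)$ from the defining identity of Lemma~\ref{Fd}, compare $\binom{n+d-1}{n}$ to $d^n/n!$, invoke identity~(\ref{E1}) to convert the series into $d^{-1/r}\int_0^{ud^{1/r}}e^{t^r}dt$, deduce the fixed-point equation $X^r\sim\frac1r\log d$ for $X=u_dd^{1/r}$, and finally pass from $u_d$ to $f(d,r)$ via Lemma~\ref{tFd} using $u_d\to0$. That is exactly the paper's strategy, including the same two elementary inequalities~(\ref{I1}) and~(\ref{I2}). The heuristic Laplace asymptotic $\int_0^Xe^{s^r}ds\sim e^{X^r}/(rX^{r-1})$ and the iterated-log solution are fine. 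The paper actually avoids the bootstrapping you describe: it evaluates $g(d,u)$ at the two test values $u=((\tfrac1r)^{1/r}\pm\varepsilon)(\tfrac{\log d}{d})^{1/r}$ and shows $g\to\infty$ in the first case and $g\to0$ in the second, which sandwiches $u_d$ by monotonicity without needing a uniform $(1+o(1))$-approximation of the whole sum; that is a modest simplification worth adopting.

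One detail in your rigorization would not work as written. You split at $n\le\sqrt d$ versus $n>\sqrt d$ and propose to use~(\ref{I2}) on the tail $n>\sqrt d$. But applying $\binom{n+d-1}{n}\le\bigl(\tfrac{e(n+d-1)}{n}\bigr)^n$ and then~(\ref{I2}) with $y=e(n+d-1)u^r$ only gives the term bound $e^{(n+d-1)u^r}$, which grows with $n$ and diverges when summed over $n>\sqrt d$. Moreover, if the head is taken as long as $n\le\sqrt d$, the bound $e\cdot d^{c^r}$ per term multiplied by $\Theta(\sqrt d)$ terms and by $u\approx d^{-1/r}$ is not $o(1)$ for $r\ge2$. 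The paper instead splits at $n=4c^re\log d$ (so the head has only $O(\log d)$ terms), uses~(\ref{I1}) and~(\ref{I2}) on the \emph{head}, and controls the \emph{tail} by a geometric series with ratio $\le\tfrac12$. With that cutoff and that allocation of the two inequalities your argument goes through; with $\sqrt d$ and~(\ref{I2}) on the tail it does not.
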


\begin{proof}
	Let $g(d,u)=\sum_{n\geq 0}\binom{n+d-1}{n}\frac{u^{rn+1}}{rn+1}$. It is not hard to see that $g$ is increasing with respect to $u$. By Lemma~\ref{Fd}, we have $g(d,u_d)=1$. Now for any $\epsilon>0$, let $u=((\frac{1}{r})^{\frac{1}{r}}+\epsilon)(\frac{\log d}{d})^{\frac{1}{r}}$, we have
	\setlength{\jot}{12pt}
	\begin{align*}
	g(d,u)&\geq\sum_{n\geq 0}\frac{d^n}{n!}\frac{u^{rn+1}}{rn+1}\\
	&=d^{-\frac{1}{r}}\sum_{n\geq 0}\frac{(ud^{\frac{1}{r}})^{rn+1}}{n!(rn+1)}\\
	&=d^{-\frac{1}{r}}\int_0^{ud^{\frac{1}{r}}}e^{t^{r}}dt \tag{by~\ref{E1}}\\
	&>d^{-\frac{1}{r}}\int_{(\log d)^{\frac{1}{r}}(\frac{1}{r})^{\frac{1}{r}}}^{(\log d)^{\frac{1}{r}}[(\frac{1}{r})^{\frac{1}{r}}+\epsilon]}e^{t^{r}}dt\\
	&>d^{-\frac{1}{r}}(\epsilon(\log d)^{\frac{1}{r}})e^{\frac{\log d}{r}}\\
	&=\epsilon(\log d)^{\frac{1}{r}}.
	\end{align*}

	This means that $g(d,u)\rightarrow\infty$ as $d\rightarrow\infty$, hence $u_d<[(\frac{1}{r})^{\frac{1}{r}}+\epsilon](\frac{\log d}{d})^{\frac{1}{r}}$ when $d$ is large enough.\\
	On the other hand, for any $\epsilon>0$, let $u=c(\frac{\log d}{d})^{\frac{1}{r}}$, where $c=(\frac{1}{r}-\epsilon)^{\frac{1}{r}}$, we have
	\begin{align*}
	g(d,u)&\leq\sum_{n\geq 0}\l(\frac{e(n+d)}{n}\r)^n\frac{u^{rn+1}}{rn+1}\\
	&=\sum_{n\geq 0}\frac{u}{rn+1}\l(\frac{e}{n}+\frac{e}{d}\r)^n(c^{r}\log d)^n
	\end{align*}
	When $n\geq 4c^{r}e\log d$, and $d$ is large enough, we have
	\begin{align*}
	\setlength{\jot}{12pt}
	\sum_{n\geq 4c^{r}e\log d}\frac{u}{rn+1}\l(\frac{e}{n}+\frac{e}{d}\r)^n(c^{r}\log d)^n
	&\leq u\sum_{n\geq 4c^{r}e\log d}\l(\frac{2e}{4c^{r}e\log d}\r)^n(c^{r}\log d)^n\\
	&=u\sum_{n\geq 4c^{r}e\log d}\l(\frac{1}{2}\r)^n\\
	&<c\l(\frac{\log d}{d}\r)^{\frac{1}{r}}\rightarrow 0\ as\ d\rightarrow\infty.
	\end{align*}
	When $n<4c^{r}e\log d$, and $d$ is large enough, we have
	
	\begin{align*}
	\sum_{n< 4c^{r}e\log d}\frac{u}{rn+1}\l(\frac{e}{n}+\frac{e}{d}\r)^n(c^{r}\log d)^n
	&<u\sum_{n< 4c^{r}e\log d}\l(1+\frac{n}{d}\r)^n\l(\frac{c^{r}e\log d}{n}\r)^n\\
	&<ue\sum_{n< 4c^{r}e\log d}\l(\frac{c^{r}e\log d}{n}\r)^n \tag{by~\ref{I1}}\\
	&<c\l(\frac{\log d}{d}\r)^{\frac{1}{r}}e(4c^{r}e\log d)e^{c^{r}\log d} \tag{by~\ref{I2}}\\
	&=4e^2c^{r+1}(\log d)^{\frac{r+1}{r}}d^{-\epsilon}\rightarrow 0\ as\ d\rightarrow\infty.
	\end{align*}
	This means that $g(d,u)\rightarrow 0$ as $d\rightarrow\infty$, hence $u_d>(\frac{1}{r}-\epsilon)^{\frac{1}{r}}(\frac{\log d}{d})^{\frac{1}{r}}$ when $d$ is large enough.\\
	These estimates imply that $u_d\sim(\frac{\log d}{rd})^{\frac{1}{r}}$, hence $u_d\rightarrow 0$ as $d\rightarrow\infty$. Recall that by Theorem~\ref{Thm1},
	\begin{equation*}
	    f(d,r)=u(d,r)-\frac{u(d,r)^{r+1}}{r+1}=u_{d-1}-\frac{u_{d-1}^{r+1}}{r+1}
	\end{equation*}
	
	Therefore, $f(d,r)\sim(\frac{\log d}{rd})^{\frac{1}{r}}$ as $d\rightarrow\infty$.
\end{proof}

\section{Acknowledgements}
We would like to thank Patrick Bennett and Deepak Bal for some helpful comments on simplifying the formulation of the main theorem. We would
also like to thank the anonymous referees for their careful reading of the paper and useful suggestions.

\end{document}